\newcommand{\N}{\mathbb{N}}
\newcommand{\Z}{\mathbb{Z}}
\newcommand{\R}{\mathbb{R}}
\newcommand{\F}{\mathbb{F}}
\newcommand{\Pg}{\mathcal{P}}
\theoremstyle{plain}\newtheorem{theorem}{Theorem}[section]
\theoremstyle{plain}\newtheorem{lemma}[theorem]{Lemma}
\theoremstyle{plain}\newtheorem{proposition}[theorem]{Proposition}
\theoremstyle{plain}\newtheorem{corollary}[theorem]{Corollary}
\theoremstyle{plain}
\theoremstyle{remark}
\newtheorem{remark}[theorem]{Remark}
\theoremstyle{definition}
\newtheorem{definition}[theorem]{Definition}
\numberwithin{equation}{section}
\title[Condensed Ricci Curvature]{Condensed Ricci Curvature on Paley Graphs and Their Generalizations}
\author[Bonini]{Vincent Bonini}
\author[Chamberlin]{Daniel Chamberlin}
\author[Cook]{Stephen Cook}
\author[Seetharaman]{Parthiv Seetharaman}
\author[Tran]{Tri Tran}
\begin{document}

\subjclass[2020]{primary 52C99, 53B99; secondary 05C10, 05C81, 05C99}
\keywords{coarse Ricci curvature, Paley graphs, complete subgraphs}

\maketitle


\begin{abstract}
We explore properties of generalized Paley graphs and we extend a result of Lim and Praeger \cite{LimP} by providing a more precise description of the connected components of disconnected generalized Paley graphs. This result leads to a new characterization of when generalized Paley graphs are disconnected. We also provide necessary and sufficient divisibility conditions for the multiplicative group of the prime subfield of certain finite fields to be contained in the multiplicative subgroup of nonzero $k$-th powers. This latter result plays a crucial role in our development of a sorting algorithm on generalized Paley graphs that exploits the vector space structure of finite fields to partition certain subsets of vertices  in a manner that decomposes the induced bipartite subgraph between them into complete balanced bipartite subgraphs. As a consequence, we establish a matching condition between these subsets of vertices that results in an explicit formula for the condensed Ricci curvature on certain Paley graphs and their generalizations. 
\end{abstract}


\section{Introduction}\label{Sec:Intro}
Ollivier defined the coarse Ricci curvature of Markov chains on metric spaces in terms of the Wasserstein (or transport) distance of measures in \cite{Ollivier,Ollivier2}, providing a synthetic notion of Ricci curvature and a bridge between Riemannian geometry and probabilistic methods. The investigation of Ollivier's coarse Ricci curvature on graphs presents an interesting avenue for research and offers an accessible framework for quantifying local connectivity of graphs. Consequently, it has many practical and computational applications in artificial intelligence, network analysis, and data science (cf. \cite{BCDDT, CNS, CLPWZZ, GLLN, GRS, GGLN, HNNNNN, LBL}).

Much work has been done on various forms of the coarse Ricci curvature on graphs (cf. \cite{BauerJostLiu, B1, bakryemery, CKKLMP, LLLY, Liu2024,JostLiu,LLY1,LLY2,Smith,BhatMuk,BCLMP,MR}). We consider a modified notion of Ollivier's coarse Ricci curvature on graphs introduced by Lin, Lu, and Yau in \cite{LLY1} that we refer to as the condensed Ricci curvature as in \cite{B1}. In particular, we study the condensed Ricci curvature on Paley graphs and their generalizations, which serve as models of pseudo-random graphs that encode algebraic relations between the elements of certain finite fields. Standard (or quadratic) Paley graphs $\Pg(q,2)$ are constructed by taking the elements of finite fields of prime power order $q=p^n \equiv 1 \bmod 4$ as vertices and defining edges between vertices that differ by squares. Generalized Paley graphs $\Pg(q,k)$ are constructed in a similar manner by taking the elements of finite fields of prime power order $q=p^n \equiv 1 \bmod 2k$ as vertices and defining edges between vertices that differ by higher order $k$-th powers (see Section \ref{Sec:GenPaleyGraphs}). The connections of Paley graphs and their generalizations to number theory, field theory, and other branches of mathematics adds to their mathematical interest and allows one to apply tools from number theory and algebra in their study.

Generalized Paley graphs $\Pg(q,k)$ share some of the well-known properties of quadratic Paley graphs. Indeed, they are symmetric and $\frac{q-1}{k}$-regular but unlike quadratic Paley graphs they are not self-complementary and may be disconnected when $k >2$ (see Section \ref{Sec:GenPaleyGraphs}). Our main results concerning the condensed Ricci curvature on generalized Paley graphs requires an understanding of their connectivity properties. In a study of the automorphism groups of generalized Paley graphs, it was shown in Theorem 2.2 of \cite{LimP} that a generalized Paley graph $\Pg(q,k)$ of order $q=p^n$ is connected if and only if $k$ is not a multiple of $(q-1)/(p^a-1)$ for any proper divisor $a$ of $n$. Moreover, if $\Pg(q,k)$ is disconnected, then each connected component is isomorphic to the generalized Paley graph $\Pg(p^a,k')$ where $k'=k(p^a-1)/(q-1) \geq 1$ and $a$ is some proper divisor of $n$ such that $(q-1)/(p^a-1)$ divides $k$.
 
From the divisibility conditions in this result of \cite{LimP}, one finds that a generalized Paley graph $\mathcal{P}(q, k)$ is connected if and only if no proper subfield of $\F_q$ contains the multiplicative subgroup of nonzero $k$-th powers. Hence, although it is not explicitly stated in \cite{LimP}, it is natural to expect that each of the connected components of a disconnected generalized Paley graph $\mathcal{P}(q, k)$ is isomorphic to a generalized Paley graph defined over the smallest subfield of $\F_q$ that contains the subgroup of nonzero $k$-th powers. Using basic properties of finite fields and finite cyclic groups, we provide a modest extension of this result of \cite{LimP} by showing that each connected component of a disconnected generalized Paley graph is isomorphic to the generalized Paley graph $\Pg(p^a,k')$ as described above, where $a$ is in fact the smallest such proper divisor of $n$, or equivalently, where $\F_{p^a}$ is the smallest subfield of $\F_q$ containing the subgroup of nonzero $k$-th powers.  

\begin{theorem}\label{Thm:IntroLimPExtension}
Let $k \geq 2$ and suppose that the generalized Paley graph $\mathcal{P}(q, k)$ of order $q=p^n$ is disconnected. Then each connected component of $\mathcal{P}(q, k)$ is isomorphic to the generalized Paley graph $\mathcal{P}(p^a, k')$ with $k' = k (p^a-1)/(q-1) \geq 1$ where $a$ is the smallest proper divisor of $n$ such that $(q-1)/(p^a-1)$ divides $k$.
\end{theorem}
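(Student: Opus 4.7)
The plan is to identify the connected component $C_0$ of the vertex $0$ in $\mathcal{P}(q,k)$ directly from the algebraic structure of $\F_q$. Let $H \subseteq \F_q^*$ denote the multiplicative subgroup of nonzero $k$-th powers, so $|H| = (q-1)/k$, and $-1 \in H$ by the hypothesis $q \equiv 1 \bmod 2k$. The neighbors of $0$ are exactly the elements of $H$, and any walk $0 \to v_1 \to \cdots \to v_m$ in $\mathcal{P}(q,k)$ realizes $v_m$ as a sum $\sum_i h_i$ with each $h_i \in H$; conversely every such sum is reachable from $0$. Hence $C_0 = V_0$, where $V_0 \le (\F_q, +)$ is the additive subgroup generated by $H$.

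The first substantive step is to show that $V_0$ is in fact a subfield of $\F_q$. It contains $1 \in H$ and is closed under addition and negation by construction, so only multiplicative closure requires comment: for any two elements $\sum_i a_i$ and $\sum_j b_j$ with $a_i, b_j \in H \cup \{0\}$, the product $\sum_{i,j} a_i b_j$ has each summand in $H \cup \{0\}$ since $H$ is a multiplicative group. Being a finite integral domain, $V_0$ is thus a subfield $\F_{p^{a_0}}$ for some divisor $a_0$ of $n$, and by construction it is the smallest subfield of $\F_q$ containing $H$. The disconnectedness hypothesis forces $a_0 < n$ to be a proper divisor.

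Next I would translate this minimality into the divisibility condition of the theorem. Since $\F_q^*$ is cyclic, $\F_{p^a}^*$ is its unique subgroup of order $p^a - 1$ while $H$ is its unique subgroup of order $(q-1)/k$, so the inclusion $H \subseteq \F_{p^a}^*$ is equivalent to $(q-1)/k \mid p^a - 1$, i.e.\ to $(q-1)/(p^a - 1) \mid k$. Thus $a_0$ is precisely the smallest proper divisor of $n$ satisfying $(q-1)/(p^{a_0} - 1) \mid k$, matching the theorem statement.

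Finally, I would verify that the induced subgraph on $V_0 = \F_{p^{a_0}}$ is isomorphic to $\mathcal{P}(p^{a_0}, k')$ with $k' = k(p^{a_0} - 1)/(q-1)$. The edges inside $V_0$ are the pairs $\{x,y\}$ with $x - y \in H$, and $H$ is a subgroup of $\F_{p^{a_0}}^*$ of order $(q-1)/k = (p^{a_0} - 1)/k'$; by uniqueness of subgroups of a given order in the cyclic group $\F_{p^{a_0}}^*$, $H$ coincides with the subgroup of nonzero $k'$-th powers in $\F_{p^{a_0}}$. Since additive translation is a graph automorphism of $\mathcal{P}(q,k)$, the remaining components are cosets of $V_0$ and isomorphic to this one. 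The main subtlety is the verification that $V_0$ is multiplicatively closed; this is what upgrades the additive subgroup to a subfield and pins down $a_0$ as the smallest proper divisor of $n$ with the required property, refining the original Lim and Praeger result.
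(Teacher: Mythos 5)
Your proof is correct, but it takes a genuinely different route from the paper's. The paper treats Theorem \ref{Thm:LimP} of \cite{LimP} as a black box: that result already says each component is isomorphic to $\Pg(p^b,k'_b)$ for \emph{some} proper divisor $b$ of $n$ with $(q-1)/(p^b-1)\mid k$, and the paper's contribution is to show that if $b$ is not the smallest such divisor then $a\mid b$ (via $(\F_q^{\times})^k\leq \F_{p^a}^{\times}\cap\F_{p^b}^{\times}=\F_{p^{\gcd(a,b)}}^{\times}$ and minimality) and that $\Pg(p^b,k'_b)$ is then itself disconnected, hence cannot be a connected component. You instead rebuild the whole structure from scratch: you identify the component of $0$ as the additive closure of $H=(\F_q^{\times})^k$, observe that this closure is multiplicatively closed and hence is the smallest subfield $\F_{p^{a_0}}$ containing $H$, translate that minimality into the divisibility condition via uniqueness of subgroups of a cyclic group, and get the remaining components as additive cosets. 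Your argument is self-contained (it does not use \cite{LimP} at all) and delivers more in one pass: it simultaneously reproves the connectivity criterion of Theorem \ref{Thm:LimP} and the field-theoretic reformulation of Theorem \ref{Thm:LimPExtension2}, since your $V_0$ is exactly $\F_p(\theta^k)$. The paper's route is shorter given the cited result and stays closer to the existing literature. One small loose end in your write-up: to call the component $\Pg(p^{a_0},k')$ in the sense of Definition \ref{Def:kPaleyGraphs} you should note that $2k'\mid p^{a_0}-1$ (this follows from $2k\mid q-1$ exactly as in \eqref{Eqn:k'Divides}), and that when $k'=1$ the component is the complete graph $K_{p^{a_0}}$; neither point affects the substance of your argument.
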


Now suppose that $\theta$ is a primitive element of the finite field $\F_q$ of order $q=p^n$ and let $(\F_q^{\times})^k$ denote the multiplicative subgroup of nonzero $k$-th powers in $\F_q^{\times}$. Noting that the smallest subfield of $\F_q$ that contains $(\F_q^{\times})^k$ is given by the field extension $\F_p(\theta^k)$, it follows that the parameter $a$ in the results of \cite{LimP} and Theorem \ref{Thm:IntroLimPExtension} is precisely the degree of this extension, or equivalently, the degree of the minimal polynomial of $\theta^k$ over $\F_p$. As a result of these observations, we have the following reformulation of the findings of \cite{LimP} and Theorem \ref{Thm:IntroLimPExtension}, which provides a new characterization of when generalized Paley graphs are disconnected.

\begin{theorem}\label{Thm:IntroLimPExtension2}
Let $k \geq 2$ and suppose that $\theta$ is a primitive element of the finite field $\F_q$ of order $q=p^n$ with $q \equiv 1 \bmod 2k$. Then the generalized Paley graph $\mathcal{P}(q, k)$ is disconnected if and only if the field extension $\F_p(\theta^k)$ is a proper subfield of $\F_q$. Furthermore, if $\mathcal{P}(q, k)$ is disconnected, then each connected component is isomorphic to the generalized Paley graph $\mathcal{P}(p^a, k')$ with  
$$k' = \frac{\vert \F_p(\theta^k)^{\times} \vert}{\vert (\F_q^{\times})^k \vert} \geq 1$$
where $a < n$ is the degree of the extension $\F_p(\theta^k)$ over $\F_p$.
\end{theorem}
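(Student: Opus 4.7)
The plan is to reduce Theorem \ref{Thm:IntroLimPExtension2} to Theorem \ref{Thm:IntroLimPExtension} by identifying the parameter $a$ appearing there with the degree $[\F_p(\theta^k):\F_p]$. The central observation is that once $\theta$ generates $\F_q^{\times}$, the subgroup $(\F_q^{\times})^k$ is exactly $\langle \theta^k \rangle$, and therefore the smallest subfield of $\F_q$ containing $(\F_q^{\times})^k$ is the smallest subfield containing $\theta^k$, namely $\F_p(\theta^k)$.

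First, I would establish the dictionary between the divisibility condition of \cite{LimP} and subfield containment. Since $q \equiv 1 \bmod 2k$ we have $k \mid q-1$, so $(\F_q^{\times})^k$ is the unique subgroup of $\F_q^{\times}$ of order $(q-1)/k$. For any divisor $a$ of $n$, the subgroup $\F_{p^a}^{\times} \subseteq \F_q^{\times}$ is the unique subgroup of order $p^a-1$. Because $\F_q^{\times}$ is cyclic, one such subgroup lies inside another iff the corresponding orders divide, which gives $(\F_q^{\times})^k \subseteq \F_{p^a}^{\times}$ iff $(q-1)/(p^a-1) \mid k$.

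Second, I would identify the smallest subfield of $\F_q$ that contains $(\F_q^{\times})^k$. Since $\theta^k$ generates this subgroup, any subfield containing it must contain $\F_p(\theta^k)$, and conversely $\F_p(\theta^k)$ contains every power of $\theta^k$. Writing $a = [\F_p(\theta^k):\F_p]$, we have $\F_p(\theta^k) = \F_{p^a}$, and the previous step shows $a$ is the smallest divisor of $n$ with $(q-1)/(p^a-1) \mid k$. Hence $\Pg(q,k)$ is disconnected iff this $a$ is a proper divisor of $n$ iff $\F_p(\theta^k) \subsetneq \F_q$, proving the first assertion. The structural claim now follows from Theorem \ref{Thm:IntroLimPExtension}, which gives that each connected component is isomorphic to $\Pg(p^a, k')$ with $k' = k(p^a-1)/(q-1)$. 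Substituting $|\F_p(\theta^k)^{\times}| = p^a - 1$ and $|(\F_q^{\times})^k| = (q-1)/k$ rewrites this as $k' = |\F_p(\theta^k)^{\times}|/|(\F_q^{\times})^k|$, as required.

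The main obstacle is really only the short cyclic-group translation in the first step; beyond that, the proof is a bookkeeping exercise in unwinding the identification $\F_{p^a} = \F_p(\theta^k)$ and applying Theorem \ref{Thm:IntroLimPExtension}, which does all the combinatorial work.
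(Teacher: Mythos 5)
Your proposal is correct and follows essentially the same route as the paper: the paper likewise translates the divisibility condition $(q-1)/(p^a-1)\mid k$ into the containment $(\F_q^{\times})^k \leq \F_{p^a}^{\times}$ via cyclicity of $\F_q^{\times}$, identifies $\F_p(\theta^k)$ as the smallest subfield containing $(\F_q^{\times})^k$ so that the parameter $a$ of Theorem \ref{Thm:IntroLimPExtension} equals $[\F_p(\theta^k):\F_p]$, and then reads off the statement as a reformulation of Theorems \ref{Thm:LimP} and \ref{Thm:IntroLimPExtension}.
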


In light of the aforementioned results of \cite{LimP}, we also provide some cases of interest where we can guarantee that generalized Paley graphs are connected.

\begin{theorem}\label{Thm:IntroSimpleConnected}
Let $k \geq 2$ and suppose that $q=p^n$ is a prime power such that $q \equiv 1 \bmod 2k$. If $k < p^{\frac{n}{2}}+1$, then the generalized Paley graph $\Pg(q,k)$ is connected.
\end{theorem}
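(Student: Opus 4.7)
The plan is to derive this as a direct corollary of the Lim--Praeger connectivity criterion recalled in the paragraph preceding Theorem~\ref{Thm:IntroLimPExtension}. According to that criterion, $\Pg(q,k)$ is disconnected if and only if there exists a proper divisor $a$ of $n$ for which the positive integer $(q-1)/(p^a-1)$ divides $k$. Since $m \mid k$ forces $m \leq k$ for positive integers $m$ and $k$, it suffices to show the strict inequality $(q-1)/(p^a-1) > k$ for every proper divisor $a$ of $n$; once this is in hand, no such divisor can witness disconnectedness, so $\Pg(q,k)$ must be connected.

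To establish the required lower bound, I would observe that any proper divisor $a$ of $n$ satisfies $a \leq n/2$, simply because $n/a$ is an integer strictly greater than $1$ and hence at least $2$. Consequently $p^a - 1 \leq p^{n/2} - 1$, and applying the elementary identity $(x^2-1)/(x-1) = x+1$ with $x = p^{n/2}$ yields
$$\frac{q-1}{p^a-1} \;\geq\; \frac{p^n-1}{p^{n/2}-1} \;=\; p^{n/2}+1 \;>\; k,$$
where the final strict inequality is precisely the hypothesis. Combining this chain of inequalities with the reduction above completes the argument.

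I do not anticipate any serious obstacle. The only points that warrant a brief check are that the manipulation $(p^n-1)/(p^{n/2}-1) = p^{n/2}+1$ is an identity of real numbers valid even when $n$ is odd (so no case split on the parity of $n$ is needed), and that the uniform bound $a \leq n/2$ really applies to every proper divisor, including $a=1$ when $n \geq 2$; in the degenerate case $n = 1$, the Lim--Praeger criterion is vacuous because $n$ admits no proper divisors, so $\Pg(p,k)$ is automatically connected and the hypothesis $k < p^{1/2}+1$ is not even needed.
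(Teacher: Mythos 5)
Your proposal is correct and follows essentially the same route as the paper's own proof: both reduce to the Lim--Praeger criterion, bound every proper divisor $a$ of $n$ by $n/2$, and use the identity $(p^n-1)/(p^{n/2}-1)=p^{n/2}+1$ to conclude that $(q-1)/(p^a-1) > k$ and hence cannot divide $k$. Your extra remarks on the odd-$n$ case and the vacuous $n=1$ case are sound but not needed beyond what the paper already does implicitly.
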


\noindent
As a consequence of Theorem \ref{Thm:IntroSimpleConnected} and a straightforward calculus argument, we obtain the fact that all generalized Paley graphs $\Pg(q,k)$ of order $q=p^{km}$ are connected. 

\begin{theorem}\label{Thm:IntrokmConnected}
Let $m\geq 1$, $k \geq 2$ and suppose that $q=p^{km}$ is a prime power such that $q \equiv 1 \bmod 2k$. Then the generalized Paley graph $\mathcal{P}(q, k)$ is connected.  
\end{theorem}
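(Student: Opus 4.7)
The plan is to deduce Theorem \ref{Thm:IntrokmConnected} directly from Theorem \ref{Thm:IntroSimpleConnected} by checking the bound $k < p^{n/2}+1$ in the specific regime $n = km$.

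First I would observe that the hypothesis $q \equiv 1 \bmod 2k$ forces $q = p^{km}$ to be odd: since $2k$ is even we have $2 \mid q-1$, so $q$ is odd, hence $p$ is an odd prime and $p \geq 3$. With $n = km$, the sufficient condition in Theorem \ref{Thm:IntroSimpleConnected} reads $k < p^{km/2}+1$, and because $p \geq 3$ and $m \geq 1$ it is enough to establish the single uniform inequality
\[
    k \;<\; 3^{k/2} + 1 \qquad \text{for every integer } k \geq 2.
\]

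The remaining step is a short calculus argument. Setting $g(x) = 3^{x/2} - (x-1)$ on $[2,\infty)$, one has $g(2) = 3 - 1 = 2 > 0$ and
\[
    g'(x) \;=\; \tfrac{\ln 3}{2}\, 3^{x/2} - 1,
\]
which at $x = 2$ equals $\tfrac{3\ln 3}{2} - 1 > 0$ and is increasing on $[2,\infty)$. Hence $g'(x) > 0$ and therefore $g(x) > 0$ throughout $[2,\infty)$, which yields $k < 3^{k/2}+1 \leq p^{km/2}+1$. Theorem \ref{Thm:IntroSimpleConnected} then delivers connectedness of $\mathcal{P}(q,k)$.

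I do not anticipate any real obstacle, as the proof is essentially a two-line reduction paired with a monotonicity check. The one point to be careful about is invoking the parity argument \emph{before} applying the growth estimate, so that the bound can be stated in terms of $p \geq 3$; one does not want to attempt the estimate with $p = 2$, which the hypothesis $q \equiv 1 \bmod 2k$ already rules out.
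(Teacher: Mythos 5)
Your proposal is correct and takes essentially the same route as the paper: both reduce the statement to the bound $k < p^{km/2}+1$ of Theorem \ref{Thm:IntroSimpleConnected} (using that $q \equiv 1 \bmod 2k$ forces $p$ odd) and then verify a uniform inequality in $k$ by a short calculus/monotonicity argument. The only difference is cosmetic --- you show $g(x)=3^{x/2}-(x-1)$ is positive and increasing on $[2,\infty)$, while the paper shows $f(x)=x-2\log_2(x-1)$ has a positive global minimum on $(1,\infty)$; your variant is, if anything, slightly cleaner at the endpoint $k=2$.
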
 

We then turn our attention to the derivation of an explicit formula for the condensed Ricci curvature on Paley graphs and their generalizations. Given a connected, locally finite, undirected, simple graph $G = (V, E)$ with shortest path distance function $\rho:V \times V \to \N \cup \{0\}$ and vertex $v \in V$, let
$$
\Gamma(v) = \{w \in V \mid \rho(v,w)=1 \}  = \{w \in V \mid vw \in E  \}
$$
denote the subset of vertices that are adjacent $v$. Then for any edge $xy \in E$, one can decompose the neighbor sets $\Gamma(x)$ and $\Gamma(y)$ into disjoint unions
$$
\Gamma(x) = N_x \cup \nabla_{xy} \cup \{y\} \quad \text{and} \quad \Gamma(y) = N_y \cup \nabla_{xy} \cup \{x\}
$$
as in \cite{BhatMuk}, where $\nabla_{xy} = \Gamma(x) \cap \Gamma(y)$ denotes the subset of vertices that are adjacent to both $x$ and $y$ and where
$$
N_x = \Gamma(x) \setminus (\nabla_{xy} \cup \{y\}) \quad \text{and} \quad N_y = \Gamma(y) \setminus (\nabla_{xy} \cup \{x\}).
$$
 
Understanding matchings between the neighbor sets $N_x$ and $N_y$ in a graph $G$ is essential to the calculation of the condensed Ricci curvature $\Bbbk (x, y)$ of an edge $xy \in E$. For example, in \cite{B1} it is shown that for any edge $xy \in E$ of a strongly regular graph of degree $d$, the condensed Ricci curvature
\begin{equation}\label{Eqn:RicciSRG}
\Bbbk (x, y) = \frac{1}{d}(2 + \vert \nabla_{xy} \vert - (|N_x| - m))
\end{equation}
where $m$ is the size of a maximum matching $\mathcal{M}$ between $N_x$ and $N_y$. In general, when a perfect matching exists between the neighbor sets $N_x$ and $N_y$ for every edge $xy \in E$, the graph $G$ is said to satisfying the Global Matching Condition \cite{Smith}. An explicit formula for the condensed Ricci curvature along edges in graphs that satisfy the Global Matching Condition is established in \cite{Smith}.  We state a combined version of Lemma 6.2 and Theorem 6.3 of \cite{Smith} below.

\begin{theorem}[\!\! \cite{Smith}]\label{Thm:IntroSmith} 
Let $G=(V,E)$ be a connected, locally finite, undirected, simple graph satisfying the Global Matching Condition. Then $G$ is regular of degree $d$ and the condensed Ricci curvature
$$\Bbbk(x,y) = \frac{1}{d}(2 + \vert \nabla_{xy} \vert)$$
for any edge $xy \in E$. 
\end{theorem}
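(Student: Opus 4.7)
The plan is to first derive regularity of $G$ from the matching hypothesis and then to compute the condensed Ricci curvature by sandwiching the Wasserstein distance between the Lin--Lu--Yau idle probability measures with matching upper and lower bounds, following the primal/dual pattern that is standard for Ollivier-type calculations.

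For regularity, fix any edge $xy \in E$. The decompositions $\Gamma(x) = N_x \cup \nabla_{xy} \cup \{y\}$ and $\Gamma(y) = N_y \cup \nabla_{xy} \cup \{x\}$ yield $\deg(x) - \deg(y) = |N_x| - |N_y|$, and the Global Matching Condition supplies a perfect matching between $N_x$ and $N_y$, so $|N_x| = |N_y|$ and hence $\deg(x) = \deg(y)$. Connectedness of $G$ then forces a common degree $d$, so $G$ is $d$-regular. Next, recall the Lin--Lu--Yau formulation: for $\alpha \in [0,1)$ and $v \in V$, let $\mu_v^\alpha$ be the probability measure assigning mass $\alpha$ to $v$ and $(1-\alpha)/d$ to each neighbor of $v$, so that $\Bbbk(x,y) = \lim_{\alpha \to 1^-}(1 - W_1(\mu_x^\alpha, \mu_y^\alpha))/(1 - \alpha)$. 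For the upper bound on $W_1$, construct an explicit transport plan: keep the shared mass $(1-\alpha)/d$ on each vertex of $\nabla_{xy}$ in place; ship the excess $\alpha - (1-\alpha)/d$ from $x$ to $y$ along the edge $xy$; and for each $u \in N_x$ ship $(1-\alpha)/d$ units along the matching edge $u\phi(u) \in E$ to $\phi(u) \in N_y$. Each unit moves distance exactly $1$, and summing the costs gives $W_1(\mu_x^\alpha, \mu_y^\alpha) \leq 1 - (1-\alpha)(2 + |\nabla_{xy}|)/d$.

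For the matching lower bound, invoke Kantorovich--Rubinstein duality with the $1$-Lipschitz test function $f(v) = \rho(v,y)$: on the supports of the two measures $f(x)=1$, $f(y)=0$, $f\equiv 1$ on $N_y \cup \nabla_{xy}$, and $f\equiv 2$ on $N_x$ since by definition each $u \in N_x$ is non-adjacent to $y$ but is joined to $y$ by the two-step path through $x$. A direct computation of $\sum_v f(v)\bigl(\mu_x^\alpha(v) - \mu_y^\alpha(v)\bigr)$ recovers precisely $1 - (1-\alpha)(2 + |\nabla_{xy}|)/d$, matching the upper bound, so the Wasserstein distance equals this quantity; passing to the limit $\alpha \to 1^-$ then gives $\Bbbk(x,y) = (2 + |\nabla_{xy}|)/d$. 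The main obstacle is exhibiting a dual witness that saturates the primal transport plan: a cruder $1$-Lipschitz function would leave slack, but the graph-distance function $\rho(\,\cdot\,,y)$ takes value exactly $2$ on $N_x$, so it correctly charges the matching-based transport and forces equality of the two bounds.
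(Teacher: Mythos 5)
Your argument is correct: the perfect matching forces $|N_x|=|N_y|$ and hence regularity via connectedness, your transport plan gives $W_1(\mu_x^\alpha,\mu_y^\alpha)\leq 1-\tfrac{1-\alpha}{d}(2+|\nabla_{xy}|)$ for $\alpha$ near $1$, and the $1$-Lipschitz dual witness $f(\cdot)=\rho(\cdot,y)$ (which indeed equals $2$ on $N_x$, $1$ on $\{x\}\cup\nabla_{xy}\cup N_y$, and $0$ at $y$) saturates that bound, so the Lin--Lu--Yau limit yields $\Bbbk(x,y)=\tfrac{1}{d}(2+|\nabla_{xy}|)$. Note that this paper does not prove the statement itself but imports it from \cite{Smith} (Lemma 6.2 and Theorem 6.3 there), so there is no in-paper proof to compare against; your primal/dual computation is the standard route to this result and is a valid self-contained proof of the cited theorem.
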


Our derivation of an explicit formula for the condensed Ricci curvature on generalized Paley graphs relies on establishing the Global Matching Condition and Theorem \ref{Thm:IntroSmith}. We refer readers to Section 2 of \cite{B1} for the precise definition of the condensed Ricci curvature on graphs and our choice of terminology. The main ideas we use to establish the Global Matching Condition were first conceived for the special case of quadratic Paley graphs $\Pg(q,2)$ of even power order $q=p^{2m}$ in an unpublished student research project \cite{B2}. In particular, in \cite{B2} the fact that the multiplicative group of the corresponding prime subfield $\F_p^{\times}$ is contained in the subgroup of nonzero squares $(\F_q^{\times})^2$ led to the idea that the vector space structure of the finite field $\F_q$ could be used to partition (or ``sort'') the vertices in $N_0$ and $N_1$ in a way that a perfect matching could be easily obtained. Then, by the symmetry of quadratic Paley graphs, there is a perfect matching between the neighbor sets $N_x$ and $N_y$ for every edge $xy \in E$.

We found that the observations of \cite{B2} hold more broadly on certain generalized Paley graphs and that the induced bipartite subgraph of edges between vertices in $N_x$ and $N_y$ can actually be decomposed into complete balanced bipartite subgraphs with a ``sorting algorithm'' (see Section \ref{Sec:GlobalMatchingCond}). With the aim of generalizing the ideas of \cite{B2}, we first establish a substantial case where the multiplicative group of the prime subfield $\F_p^{\times}$ of a finite field $\F_q$ is contained in the subgroup of nonzero $k$-th powers $(\F_q^{\times})^k$.

\begin{theorem}\label{Thm:IntroPrimeBaseField}
Suppose $p$ and $k$ are prime. Then $\F_p^{\times} \leq (\F_{p^{km}}^{\times})^k$ for any positive integer $m$.
\end{theorem}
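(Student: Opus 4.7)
The plan is to reformulate the containment $\F_p^{\times} \leq (\F_{p^{km}}^{\times})^k$ purely in cyclic group terms. Setting $q = p^{km}$, the group $\F_q^{\times}$ is cyclic of order $q-1$, so both $\F_p^{\times}$ and $(\F_q^{\times})^k$ are determined by their orders: $\F_p^{\times}$ is the unique subgroup of order $p-1$, while $(\F_q^{\times})^k$ is the unique subgroup of index $d := \gcd(k, q-1)$. Fixing a generator $g$ of $\F_q^{\times}$, the unique subgroup of order $p-1$ is generated by $g^{(q-1)/(p-1)}$, and it lies in $(\F_q^{\times})^k = \langle g^d \rangle$ exactly when $d \mid (q-1)/(p-1)$, that is, when
\[
\gcd(k,\, p^{km}-1) \;\Big|\; 1 + p + p^2 + \cdots + p^{km-1}.
\]

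The case $k = p$ is immediate, since $x \mapsto x^p$ is the Frobenius automorphism of $\F_q$ and every element is automatically a $p$-th power. I may therefore assume $\gcd(k,p) = 1$, and since $k$ is prime, $d \in \{1, k\}$; only the subcase $k \mid p^{km}-1$ requires work. Let $f$ denote the multiplicative order of $p$ modulo $k$; by Fermat's little theorem $f \mid k-1$, so $\gcd(f,k) = 1$, and hence $k \mid p^{km}-1$ forces $f \mid m$. Writing $e := km/f$ and grouping the geometric sum into $e$ consecutive blocks of length $f$ then yields
\[
1 + p + p^2 + \cdots + p^{km-1} \;\equiv\; e\,(1 + p + p^2 + \cdots + p^{f-1}) \pmod{k}.
\]

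To finish, I split on the value of $f$. If $f = 1$, then $p \equiv 1 \pmod{k}$ and the right-hand side reduces to $e = km \equiv 0 \pmod{k}$. If $f > 1$, then $p \not\equiv 1 \pmod{k}$, and since $k$ is prime this gives $\gcd(p-1, k) = 1$; combining with $k \mid p^f - 1 = (p-1)(1 + p + \cdots + p^{f-1})$ forces $k \mid 1 + p + \cdots + p^{f-1}$, and so $k$ again divides the full sum. In every case the displayed divisibility holds, which completes the argument. I do not anticipate a serious obstacle; the main subtlety is recognizing that the multiplicative order $f$ of $p$ modulo $k$ is the correct organizing quantity, and separating the harmless edge case $f = 1$ from the geometric-series identity that does the real work when $f > 1$.
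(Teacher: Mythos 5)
Your proof is correct, but it takes a genuinely different route from the paper's. The paper argues field-theoretically: for $\alpha \in \F_p^{\times}$ not already a $k$-th power in $\F_p$, it invokes the irreducibility of $x^k - \alpha$ over $\F_p$ for prime $k$ (Theorem 9.1 of Lang), so that adjoining a root produces a copy of $\F_{p^k}$ in which $\alpha$ is visibly a $k$-th power, and the containment $\F_{p^k} \subseteq \F_{p^{km}}$ finishes the argument. You instead reduce everything to cyclic-group arithmetic: the containment is equivalent to $\gcd(k, q-1) \mid (q-1)/(p-1)$, and you verify $k \mid 1 + p + \cdots + p^{km-1}$ directly by organizing the geometric sum around the multiplicative order $f$ of $p$ modulo $k$. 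Both arguments use the primality of $k$ essentially (Lang's irreducibility criterion in the paper; $d \in \{1,k\}$, $\gcd(f,k)=1$, and $\gcd(p-1,k)=1$ in yours). Your route is elementary and self-contained, and it dovetails with the paper's own Theorem \ref{Thm:EquivBaseField}: in effect you directly verify the divisibility condition $k \mid \frac{q-1}{p-1}$ that the paper shows is equivalent to the containment when $k \mid q-1$, while also covering the degenerate case $\gcd(k,q-1)=1$ without needing the standing hypothesis $q \equiv 1 \bmod 2k$. What the paper's route buys is slightly more structural information, namely that every element of $\F_p^{\times}$ already acquires a $k$-th root in the specific subfield $\F_{p^k}$.
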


\noindent
Moreover, as a simple consequence of the properties of finite cyclic groups, we have the following necessary and sufficient divisibility conditions for the multiplicative group of the prime subfield to be contained in the subgroup of nonzero $k$-th powers for the finite fields that serve as vertex sets of generalized Paley graphs.

\begin{theorem}\label{Thm:IntroEquivBaseField}
Let $k \geq 2$ and suppose that $q=p^n$ is a prime power such that $q \equiv 1 \bmod 2k$. Then $\F_p^{\times} \leq (\F_q^{\times})^k$ if and only if $k \mid \frac{q-1}{p-1}$.
\end{theorem}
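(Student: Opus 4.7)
The plan is to translate everything into the language of cyclic groups and invoke the correspondence between subgroups of a finite cyclic group and divisors of its order. The key observation is that $\F_q^{\times}$ is cyclic of order $q-1$, so it has a unique subgroup of each order dividing $q-1$, and containment of two such subgroups is determined purely by divisibility of their orders.

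First, I would identify the orders of the two subgroups in question. Since $q \equiv 1 \bmod 2k$, in particular $k \mid q-1$, so the subgroup $(\F_q^{\times})^k$ of $k$-th powers has order
\[
\bigl| (\F_q^{\times})^k \bigr| = \frac{q-1}{\gcd(k,q-1)} = \frac{q-1}{k}.
\]
On the other hand, since $\F_p \subseteq \F_q$ we have $p-1 \mid q-1$, and $\F_p^{\times}$ is precisely the unique subgroup of $\F_q^{\times}$ of order $p-1$ (it coincides with the set of $(q-1)/(p-1)$-th powers).

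Next I would apply the standard fact that, inside a cyclic group, a subgroup $H_1$ is contained in a subgroup $H_2$ if and only if $|H_1|$ divides $|H_2|$. Applying this with $H_1 = \F_p^{\times}$ and $H_2 = (\F_q^{\times})^k$ yields
\[
\F_p^{\times} \leq (\F_q^{\times})^k \quad \Longleftrightarrow \quad (p-1) \;\Big|\; \frac{q-1}{k}.
\]
Finally, since $k \mid q-1$ and $p-1 \mid q-1$, the condition $(p-1) \mid (q-1)/k$ is equivalent to $k(p-1) \mid q-1$, which in turn is equivalent to $k \mid (q-1)/(p-1)$, completing the equivalence.

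Because the argument reduces to elementary facts about cyclic groups, there is no real obstacle; the only point requiring minor care is verifying that the hypothesis $q \equiv 1 \bmod 2k$ gives $\gcd(k,q-1) = k$ so that $(\F_q^{\times})^k$ truly has order $(q-1)/k$, and that $p-1 \mid q-1$ so that the final divisibility manipulation is valid.
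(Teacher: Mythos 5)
Your proof is correct and follows essentially the same route as the paper: both compute $\vert \F_p^{\times}\vert = p-1$ and $\vert (\F_q^{\times})^k \vert = \frac{q-1}{k}$, then use the fact that containment of subgroups of a finite cyclic group is equivalent to divisibility of their orders, yielding $p-1 \mid \frac{q-1}{k}$ if and only if $k \mid \frac{q-1}{p-1}$. Your write-up just makes a few of the routine verifications (e.g.\ $\gcd(k,q-1)=k$) more explicit than the paper does.
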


\noindent
Furthermore, as a straightforward consequence of Theorem \ref{Thm:IntroEquivBaseField}, we find that if $\F_q$ is a finite field of order $q=p^{n} \equiv 1 \bmod 2k$ and $k \mid p-1$, then the multiplicative group of the prime subfield is contained in the subgroup of nonzero $k$-th powers if and only if $n$ is a multiple of $k$. 

\begin{corollary}\label{Cor:IntroknPower}
Let $k \geq 2$ and suppose that $q=p^n$ is a prime power such that $q \equiv 1 \bmod 2k$. If $k \mid p-1$, then $\F_p^{\times} \leq (\F_q^{\times})^k$ if and only if $n \equiv 0 \bmod k$.
\end{corollary}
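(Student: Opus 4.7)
The plan is to reduce the claim directly to Theorem \ref{Thm:IntroEquivBaseField} and then perform an elementary modular arithmetic computation. Since $q \equiv 1 \bmod 2k$, Theorem \ref{Thm:IntroEquivBaseField} tells us that $\F_p^{\times} \leq (\F_q^{\times})^k$ holds if and only if $k \mid \frac{q-1}{p-1}$. Thus the corollary will follow once we prove the equivalence
$$k \mid \frac{p^n-1}{p-1} \quad \Longleftrightarrow \quad k \mid n,$$
under the extra hypothesis that $k \mid p-1$.

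For this, I would use the geometric series identity
$$\frac{p^n-1}{p-1} = 1 + p + p^2 + \cdots + p^{n-1}.$$
Because $k \mid p-1$, we have $p \equiv 1 \pmod{k}$, so $p^i \equiv 1 \pmod{k}$ for each $i \geq 0$. Summing $n$ such terms gives
$$\frac{p^n-1}{p-1} \equiv \underbrace{1 + 1 + \cdots + 1}_{n \text{ times}} \equiv n \pmod{k}.$$
Therefore $k \mid \frac{p^n-1}{p-1}$ precisely when $k \mid n$, and combining this with Theorem \ref{Thm:IntroEquivBaseField} finishes the proof.

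There is no serious obstacle here; the argument is a short modular computation married to the divisibility characterization already established in Theorem \ref{Thm:IntroEquivBaseField}. The only mild point of care is to confirm that the congruence $\frac{p^n-1}{p-1} \equiv n \pmod{k}$ is performed in $\Z$ (the quotient is an integer since $(p-1) \mid (p^n - 1)$), so that the divisibility statement on integers is genuinely equivalent to $k \mid n$.
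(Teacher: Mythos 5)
Your proposal is correct and matches the paper's own proof essentially verbatim: both reduce via Theorem \ref{Thm:IntroEquivBaseField} to the divisibility $k \mid \frac{q-1}{p-1}$ and then use the geometric series expansion $\frac{q-1}{p-1} = 1 + p + \cdots + p^{n-1} \equiv n \pmod{k}$, which holds because $k \mid p-1$ forces $p \equiv 1 \pmod{k}$. Nothing further is needed.
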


With Theorems \ref{Thm:IntroPrimeBaseField} and \ref{Thm:IntroEquivBaseField} in hand, we then formally develop the sorting algorithm conceived in \cite{B2} and use it to establish the Global Matching Condition on connected generalized Paley graphs in which the multiplicative group of the corresponding prime subfield is contained in the subgroup of nonzero $k$-th powers (see Section \ref{Sec:GlobalMatchingCond}).

\begin{theorem}\label{Thm:IntroGlobalMatch}
Let $k \geq 2$ and suppose $\Pg(q,k)$ is a connected generalized Paley graph of order $q=p^n$. If $\F_p^{\times} \leq (\F_q^{\times})^k$, then $\Pg(q,k)$ satisfies the Global Matching Condition.
\end{theorem}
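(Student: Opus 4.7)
The plan is to establish, for every edge $xy$ of $\Pg(q,k)$, a perfect matching between the neighbor sets $N_x$ and $N_y$. First, I would exploit the symmetries of the graph: since $\Pg(q,k)$ is a Cayley graph on $(\F_q, +)$ with connection set $(\F_q^{\times})^k$, translations are automorphisms, and for any $\alpha \in (\F_q^{\times})^k$ the multiplication map $v \mapsto \alpha v$ is also an automorphism (it preserves the connection set). Hence, after translating so that $x = 0$ and rescaling by $y^{-1} \in (\F_q^{\times})^k$, the problem reduces to constructing a perfect matching between
\[
N_0 = \{a \in (\F_q^{\times})^k : a - 1 \notin (\F_q^{\times})^k \cup \{0\}\}
\]
and
\[
N_1 = \{b \in \F_q : b - 1 \in (\F_q^{\times})^k, \ b \notin (\F_q^{\times})^k \cup \{0\}\}.
\]
Using $-1 \in (\F_q^{\times})^k$, which holds because $q \equiv 1 \bmod 2k$, one checks that $a \mapsto 1-a$ is a bijection from $N_0$ to $N_1$, so in particular $|N_0| = |N_1|$.

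Next, I would formally develop the sorting algorithm. The crucial consequence of $\F_p^{\times} \leq (\F_q^{\times})^k$ is that for every $v \in \F_q$ the $\F_p$-affine line $v + \F_p$ forms a clique in $\Pg(q,k)$, since any two distinct elements differ by an element of $\F_p^{\times} \subseteq (\F_q^{\times})^k$. More generally, rescaling differences by any element of $\F_p^{\times}$ preserves each $(\F_q^{\times})^k$-coset, so the $\F_p$-vector space structure of $\F_q$ interacts coherently with the $k$-th power cosets. This $\F_p$-invariance is what enables a compatible partition of $N_0$ and $N_1$ based on the ambient linear structure.

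The sorting step then yields partitions $N_0 = A_1 \sqcup \cdots \sqcup A_s$ and $N_1 = B_1 \sqcup \cdots \sqcup B_s$, indexed by suitable block data (combining $\F_p$-line classes with the relevant $(\F_q^{\times})^k$-coset information), such that $|A_i| = |B_i|$ and the induced bipartite subgraph on each pair $(A_i, B_i)$ is complete bipartite. Once this decomposition is in place, any bijection $A_i \to B_i$ is a perfect matching of the corresponding complete bipartite subgraph, and the union over $i$ gives a perfect matching between $N_0$ and $N_1$. Combined with the reduction above, the Global Matching Condition follows.

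The principal obstacle is the sorting step itself. One must produce a single partition that simultaneously respects the two coset conditions defining $N_0$ (on $a$ and on $a-1$) and $N_1$ (on $b$ and on $b-1$), guarantees $|A_i| = |B_i|$ for each $i$, and ensures that every cross-edge between $A_i$ and $B_i$ lies in $\Pg(q,k)$. The hypothesis $\F_p^{\times} \leq (\F_q^{\times})^k$ enters in two ways: it forces $\F_p$-cosets to behave coherently with respect to $(\F_q^{\times})^k$-coset membership, and it supplies, via Theorems \ref{Thm:IntroPrimeBaseField} and \ref{Thm:IntroEquivBaseField}, the arithmetic flexibility needed to balance the block sizes. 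Carefully verifying these two properties is the heart of the argument.
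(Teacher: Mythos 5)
Your overall architecture matches the paper's: reduce by arc-transitivity to the edge $01$, partition $N_0$ and $N_1$ along the $\F_p$-affine lines $v+\F_p$, observe that $\F_p^{\times}\leq(\F_q^{\times})^k$ makes each such line a clique so that every block of the induced bipartite graph is complete, and then match blockwise. However, there is a genuine gap: you never establish that the blocks are \emph{balanced}, i.e.\ that $\vert N_0\cap S\vert=\vert N_1\cap S\vert$ for each line $S=S(a_1,\dots,a_{n-1})=\{b+a_1\theta+\cdots+a_{n-1}\theta^{n-1}\mid b\in\F_p\}$. You explicitly flag this as ``the heart of the argument'' and defer it, but without it there is no perfect matching. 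The global bijection $a\mapsto 1-a$ you propose does not help here, since it sends the line $S(a_1,\dots,a_{n-1})$ to the different line $S(-a_1,\dots,-a_{n-1})$; it proves $\vert N_0\vert=\vert N_1\vert$ but not the per-block equality that the blockwise matching requires.

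The paper fills this gap with a short lemma (Lemma \ref{Lem:PartitionSets}): the translation $x\mapsto x+1$ is a bijection from $\Gamma(0)$ onto $\Gamma(1)$ that preserves each line $S$ (because $1\in\F_p$ only changes the constant coordinate $b$), so $\vert\Gamma(0)\cap S\vert=\vert\Gamma(1)\cap S\vert$; since $S$ misses $0$ and $1$ when the $a_i$ are not all zero, subtracting the common term $\vert\nabla_{01}\cap S\vert$ from the disjoint decompositions $\Gamma(0)=\{1\}\cup N_0\cup\nabla_{01}$ and $\Gamma(1)=\{0\}\cup N_1\cup\nabla_{01}$ gives $\vert N_0\cap S\vert=\vert N_1\cap S\vert$. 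Two further points: your suggestion that the partition must additionally track ``$(\F_q^{\times})^k$-coset information'' is an unnecessary complication --- the plain $\F_p$-lines already make each block complete bipartite, exactly by the clique observation you made; and to see that the lines with $(a_1,\dots,a_{n-1})\neq 0$ exhaust $N_0$ and $N_1$, note that $\F_p^{\times}\leq(\F_q^{\times})^k$ forces $\F_p\subseteq\{0\}\cup\{1\}\cup\nabla_{01}$, so $N_0,N_1\subseteq\F_q\setminus\F_p$.
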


Due to Theorems \ref{Thm:IntrokmConnected}, \ref{Thm:IntroSmith}, \ref{Thm:IntroPrimeBaseField}, and \ref{Thm:IntroGlobalMatch}, we then obtain the following generalization of the unpublished work of \cite{B2} on quadratic Paley graphs $\Pg(q,2)$ of even power order $q=p^{2m}$.

\begin{theorem}\label{Thm:PrimeRicci}
Let $m \geq 1$ and suppose that $\Pg(q,k)=(V,E)$ is a generalized Paley graph of order $q=p^{km}$ where $k$ is prime. Then the condensed Ricci curvature
$$\Bbbk(x,y) = \frac{k}{q-1}(2 + \vert \nabla_{xy} \vert)$$
for any edge $xy \in E$.  
\end{theorem}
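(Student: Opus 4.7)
The plan is to obtain the formula by assembling the preceding theorems into a short chain of implications; essentially all of the technical work has been done in Theorems \ref{Thm:IntrokmConnected}, \ref{Thm:IntroPrimeBaseField}, \ref{Thm:IntroGlobalMatch}, and \ref{Thm:IntroSmith}, and what remains is to verify that their hypotheses hold in the present setting.

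First I would note that, by the standing assumption that $\Pg(q,k)$ is a generalized Paley graph, we automatically have $q \equiv 1 \bmod 2k$, so Theorem \ref{Thm:IntrokmConnected} (applied with exponent $km$) gives that $\Pg(q,k)$ is connected. Next, since $k$ is prime by hypothesis and $p$ is prime because $q=p^{km}$ is a prime power, Theorem \ref{Thm:IntroPrimeBaseField} yields the subfield containment $\F_p^{\times} \leq (\F_q^{\times})^k$. Feeding these two conclusions into Theorem \ref{Thm:IntroGlobalMatch} shows that $\Pg(q,k)$ satisfies the Global Matching Condition.

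Finally, I would invoke Theorem \ref{Thm:IntroSmith}, which asserts that any connected, locally finite, undirected, simple graph satisfying the Global Matching Condition is regular of some degree $d$ and has condensed Ricci curvature $\Bbbk(x,y) = \frac{1}{d}(2 + |\nabla_{xy}|)$ on every edge. In our setting, $\Pg(q,k)$ is already known to be $\frac{q-1}{k}$-regular, so substituting $d = \frac{q-1}{k}$ yields precisely the stated formula $\Bbbk(x,y) = \frac{k}{q-1}(2 + |\nabla_{xy}|)$.

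There is no substantive obstacle here, since the genuine content, namely connectedness, the algebraic fact $\F_p^\times \leq (\F_q^\times)^k$ for $k$ prime, and the sorting-algorithm proof of the Global Matching Condition, has been packaged into the earlier theorems. The only step that warrants even a cursory check is that the hypothesis $q \equiv 1 \bmod 2k$ required by Theorem \ref{Thm:IntrokmConnected} is compatible with $q = p^{km}$, and this is guaranteed a priori by the assumption that $\Pg(q,k)$ is a generalized Paley graph. Hence the proof is a clean concatenation of the four preceding results.
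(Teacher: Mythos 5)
Your proposal is correct and follows essentially the same route as the paper: connectedness via Theorem \ref{Thm:kmConnected}, the containment $\F_p^{\times} \leq (\F_q^{\times})^k$ via Theorem \ref{Thm:PrimeBaseField}, the Global Matching Condition via Theorem \ref{Thm:GlobalMatch}, and then Theorem \ref{Thm:IntroSmith} with $d = \frac{q-1}{k}$. No gaps.
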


\noindent
More generally, as a consequence of Theorems \ref{Thm:IntroSmith}, \ref{Thm:IntroEquivBaseField}, and \ref{Thm:IntroGlobalMatch}, we obtain the same explicit formula for the condensed Ricci curvature on generalized Paley graphs $\Pg(q,k)$ of order $q=p^n$ satisfying the divisibility condition $k \mid \frac{q-1}{p-1}$.

\begin{theorem}\label{Thm:IntroRicci}
Let $k \geq 2$ and suppose that $\Pg(q,k)=(V,E)$ is a generalized Paley graph of order $q=p^n$ where $k \mid \frac{q-1}{p-1}$. Then the condensed Ricci curvature
$$\Bbbk(x,y) = \frac{k}{q-1}(2 + \vert \nabla_{xy} \vert)$$
for any edge $xy \in E$.  
\end{theorem}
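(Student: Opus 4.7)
The plan is to derive this as a direct corollary of the machinery already assembled in the introduction. Recall that $\Pg(q,k)$ is $\frac{q-1}{k}$-regular, so the asserted identity is equivalent to $\Bbbk(x,y)=\frac{1}{d}(2+\vert\nabla_{xy}\vert)$ with $d=\frac{q-1}{k}$. Hence it suffices to establish the Global Matching Condition on every connected component containing an edge $xy$, since Theorem \ref{Thm:IntroSmith} then supplies the formula with the correct regularity constant.

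By Theorem \ref{Thm:IntroEquivBaseField}, the hypothesis $k\mid\frac{q-1}{p-1}$ is equivalent to the subfield containment $\F_p^{\times}\leq(\F_q^{\times})^k$. If $\Pg(q,k)$ is connected, Theorem \ref{Thm:IntroGlobalMatch} applies directly to produce the Global Matching Condition and Theorem \ref{Thm:IntroSmith} then yields the desired formula. If instead $\Pg(q,k)$ is disconnected, I would reduce to each component individually, using the fact that $\Bbbk(x,y)$ together with $\Gamma(x),\Gamma(y),\nabla_{xy},N_x,N_y$ depends only on the connected component $C$ containing $xy$. By Theorem \ref{Thm:IntroLimPExtension}, $C$ is isomorphic to the generalized Paley graph $\Pg(p^a,k')$ with $k'=k(p^a-1)/(q-1)$ for some proper divisor $a$ of $n$. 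A short arithmetic check shows that $k\mid\frac{q-1}{p-1}$ rearranges exactly to $k'\mid\frac{p^a-1}{p-1}$, so Theorem \ref{Thm:IntroEquivBaseField} applied to $C$ gives $\F_p^{\times}\leq(\F_{p^a}^{\times})^{k'}$, Theorem \ref{Thm:IntroGlobalMatch} then produces the Global Matching Condition on $C$, and Theorem \ref{Thm:IntroSmith} applied to $C$ yields $\Bbbk(x,y)=\frac{k'}{p^a-1}(2+\vert\nabla_{xy}\vert)$. The identity $(p^a-1)/k'=(q-1)/k$, immediate from the definition of $k'$, ensures this matches the stated formula.

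The only mildly delicate step is the bookkeeping in the disconnected case: the equivalence $k\mid\frac{q-1}{p-1}\iff k'\mid\frac{p^a-1}{p-1}$ is what ensures the subfield hypothesis of Theorem \ref{Thm:IntroGlobalMatch} transfers to each component, and the resulting coincidence of regularity constants is what makes the stated formula independent of which component the edge lies in. Beyond this short arithmetic verification, no new ideas are required.
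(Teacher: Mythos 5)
Your proposal follows essentially the same route as the paper: the connected case is handled verbatim (Theorem \ref{Thm:IntroEquivBaseField} $\Rightarrow$ Theorem \ref{Thm:IntroGlobalMatch} $\Rightarrow$ Theorem \ref{Thm:IntroSmith}), and in the disconnected case you reduce to the components $\Pg(p^a,k')$ via Theorem \ref{Thm:IntroLimPExtension} and transfer the subfield hypothesis. Your arithmetic transfer is correct and is the same computation the paper performs: from $k'=k(p^a-1)/(q-1)$ one gets $(p^a-1)/k'=(q-1)/k$, so $k\mid\frac{q-1}{p-1}$ is equivalent to $p-1\mid\frac{p^a-1}{k'}$, i.e.\ to $k'\mid\frac{p^a-1}{p-1}$, and the coincidence of regularity constants then makes the formula component-independent.

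There is one genuine (though easily repaired) gap: the case $k'=1$. This case really occurs under your hypotheses --- for instance $\Pg(9,4)$ has $k\mid\frac{q-1}{p-1}$ and decomposes into copies of $\Pg(3,1)=K_3$ --- and when $k'=1$ both Theorem \ref{Thm:IntroEquivBaseField} and Theorem \ref{Thm:IntroGlobalMatch} are inapplicable as stated, since each is hypothesized for $k\geq 2$ (and Theorem \ref{Thm:IntroEquivBaseField} additionally presumes $q\equiv 1\bmod 2k$ in the sense of Definition \ref{Def:kPaleyGraphs}, which is not how $K_{p^a}$ arises). The paper treats this case separately: each component is the complete graph $K_{p^a}$, for which $\vert\nabla_{xy}\vert=p^a-2$, and Theorem \ref{Thm:CompleteGraphsIntro} gives $\Bbbk(x,y)=\frac{p^a}{p^a-1}=\frac{1}{p^a-1}(2+\vert\nabla_{xy}\vert)=\frac{k}{q-1}(2+\vert\nabla_{xy}\vert)$. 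Alternatively you could observe that $N_x=N_y=\varnothing$ in a complete graph, so the Global Matching Condition holds trivially and Theorem \ref{Thm:IntroSmith} still applies; either way, the case must be addressed rather than folded into the $k\geq 2$ citations.
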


We would like to point out that the formula for the condensed Ricci curvature in Theorem \ref{Thm:IntroRicci} holds for both connected and disconnected generalized Paley graphs. However, in our work we consider these cases separately (see Theorems \ref{Thm:Ricci} and \ref{Thm:DisconnectedRicci}) and we recover the same formula for the condensed Ricci curvature by applying our results for connected graphs to the connected components of disconnected generalized Paley graphs. In the special case that the connected components of a disconnected generalized Paley graph are complete graphs, we appeal to the following result stated in \cite{LLY1} and proved in \cite{B1}.

\begin{theorem}[\!\! \cite{B1, LLY1}]\label{Thm:CompleteGraphsIntro}
A connected, finite, undirected, simple graph $G = (V, E)$ is complete if and only if the condensed Ricci curvature $\Bbbk(x,y) > 1$ for all vertices $x,y \in V$. In particular, if $G$ is a complete graph on $n$ vertices, then $\Bbbk(x,y) =\frac{n}{n-1}$ for all vertices $x,y \in V$. 
\end{theorem}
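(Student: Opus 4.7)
The plan is to prove the two implications of the biconditional, with the forward direction simultaneously yielding the ``in particular'' quantitative statement. For the forward direction, I would assume $G = K_n$ and verify the hypotheses of Theorem \ref{Thm:IntroSmith} directly. Every vertex has degree $d = n - 1$; for any edge $xy$ one has $\nabla_{xy} = V \setminus \{x, y\}$ and $N_x = N_y = \emptyset$, so the empty matching between $N_x$ and $N_y$ is vacuously perfect and the Global Matching Condition holds. Theorem \ref{Thm:IntroSmith} then yields
\begin{equation*}
\Bbbk(x,y) \;=\; \frac{1}{n-1}\bigl(2 + (n-2)\bigr) \;=\; \frac{n}{n-1} \;>\; 1,
\end{equation*}
which establishes both the forward implication and the quantitative value.

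For the converse, I would argue by contrapositive: if $G$ is connected but not complete, its diameter is at least $2$, so I can choose $x, y \in V$ with $\rho(x, y) = 2$ and show $\Bbbk(x, y) \leq 1$. The plan is to invoke the Kantorovich dual characterization of the $1$-Wasserstein distance with the $1$-Lipschitz test function $f(v) = \rho(x, v)$. A direct computation gives $\int f\, d\mu_x^\alpha = 1 - \alpha$, since $x$ carries mass $\alpha$ at distance $0$ while each neighbor of $x$ sits at distance $1$ with total mass $(1-\alpha)$. Similarly, $\int f\, d\mu_y^\alpha \geq 2\alpha + (1-\alpha) = 1 + \alpha$, because $y$ carries mass $\alpha$ at distance $2$ and every neighbor of $y$ lies at distance at least $1$ from $x$ while carrying total mass $(1-\alpha)$. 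Subtracting yields $W_1(\mu_x^\alpha, \mu_y^\alpha) \geq 2\alpha$, so
\begin{equation*}
\kappa_\alpha(x, y) \;=\; 1 - \frac{W_1(\mu_x^\alpha, \mu_y^\alpha)}{\rho(x, y)} \;\leq\; 1 - \alpha,
\end{equation*}
and dividing by $(1-\alpha)$ before letting $\alpha \to 1^-$ gives $\Bbbk(x, y) \leq 1$. Together with the forward direction, this completes the biconditional.

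The main obstacle is the converse, where the upper bound on the curvature at a distance-$2$ pair must survive arbitrary configurations of common neighbors and higher-distance vertices in $\Gamma(y)$. The observation that saves the argument is that every vertex in the support of $\mu_y^\alpha$, namely $\{y\} \cup \Gamma(y)$, lies at distance at least $1$ from $x$, so the Kantorovich lower bound $\int f\, d\mu_y^\alpha \geq 1 + \alpha$ is robust and no triangle sharing can reduce the transport cost below $2\alpha$ to leading order in $1 - \alpha$. The careful bookkeeping of the $\alpha$ and $(1-\alpha)$ contributions is what ensures that the Lin--Lu--Yau limit exists and is bounded by $1$ as claimed.
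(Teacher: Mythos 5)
Your proposal is correct, but note that the paper itself offers no proof of this statement: Theorem \ref{Thm:CompleteGraphsIntro} is imported verbatim from the references (stated in Lin--Lu--Yau and proved in the cited companion work), so there is no in-paper argument to compare against. Taken on its own terms, your argument is sound and self-contained. The forward direction is a clean shortcut through the machinery already available here: in $K_n$ one has $N_x = N_y = \varnothing$ for every edge, the empty matching is vacuously perfect, and Theorem \ref{Thm:IntroSmith} with $d = n-1$ and $\vert \nabla_{xy} \vert = n-2$ gives $\Bbbk(x,y) = \tfrac{n}{n-1} > 1$; since every pair of distinct vertices of $K_n$ is an edge, this covers all pairs, which the original sources instead obtain by a direct optimal-transport computation. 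The converse via the Kantorovich dual is the standard route and is carried out correctly: a connected, finite, non-complete graph contains a pair at distance exactly $2$ (truncate a shortest path between two non-adjacent vertices), the test function $f(v) = \rho(x,v)$ is $1$-Lipschitz, every point of the support of $\mu_y^{\alpha}$ is at distance at least $1$ from $x$ because $x \notin \Gamma(y)$, and the resulting bound $W_1(\mu_x^{\alpha},\mu_y^{\alpha}) \geq 2\alpha$ yields $\kappa_{\alpha}(x,y) \leq 1-\alpha$, hence $\Bbbk(x,y) \leq 1$ after dividing by $1-\alpha$ and passing to the limit. The only point worth making explicit is that the condensed Ricci curvature of this paper is precisely the Lin--Lu--Yau normalization $\lim_{\alpha \to 1^-} \kappa_{\alpha}(x,y)/(1-\alpha)$, whose existence is guaranteed by the cited sources, so the final inequality is legitimate.
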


\begin{remark} Generalized Paley graphs $\Pg(q,k)$ can be defined over finite fields of even order $q=2^n$ as in \cite{LimP}. In this case, one requires $q \equiv 1 \bmod k$ rather than requiring $q \equiv 1 \bmod 2k$ as in the case for odd prime powers. Our work applies to generalized Paley graphs of even order, although some our results are trivial when considering finite fields of characteristic $2$. However, for simplicity in presentation, we have chosen to focus on generalized Paley graphs of odd order.
\end{remark}

This paper is organized as follows: In Section \ref{Sec:GenPaleyGraphs} we formally define generalized Paley graphs and we discuss some of their relavant properties. In particular, we extend the results \cite{LimP} by providing a more precise description of the connected components of disconnected generalized Paley graphs and we give a new characterization of when generalized Paley graphs are disconnected. In Section \ref{Sec:GlobalMatchingCond} we establish results on the containment of the multiplicative group of the prime subfield of a finite field in the subgroup of nonzero $k$-th powers and we develop a ``sorting algorithm'', which leads to establishing the Global Matching Condition and our explicit formulas for the condensed Ricci curvature on Paley graphs and their generalizations.


\section*{Acknowledgements}
This research was generously supported by the William and Linda Frost Fund in the Cal Poly Bailey College of Science and Mathematics. The authors would like to thank the referee for their careful reading of our work and their valuable input and insight. We also extend our gratitude to Professor Eric Brussel and Professor Rob Easton of Cal Poly for many vauble conversations.


\section{Generalized Paley Graphs and Their Properties}\label{Sec:GenPaleyGraphs}

In this section we introduce the definition of generalized Paley graphs and we discuss some of their relevant properties. Consider the finite field $\F_q$ of order $q=p^n$ and let $\F_q^{\times}$ denote the multiplicative group of $\F_q$. Then the set of nonzero squares in $\F_q$ is denoted by
$$(\F_q^{\times})^2 = \{\alpha \in \F_q^{\times} \mid \alpha = \beta^2  \ \text{for some} \ \beta \in \F_q^{\times}\}.$$
Paley graphs are then constructed by taking the field elements of certain finite fields as vertices and defining edges between those vertices that differ by squares.

\begin{definition}\label{Def:QPaleyGraphs}
Let $q=p^n$ be a prime power such that $q \equiv 1 \bmod 4$. Then the {\it Paley graph of order} $q$ is defined to be the graph $\mathcal{P}(q)=(V,E)$ with vertex set $V=\F_q$ and edge set $E=\{xy \mid x-y \in (\F_q^{\times})^2\}$.
\end{definition}

We refer to Paley graphs with edges between vertices that differ by squares as quadratic Paley graphs. It is known that quadratic Paley graphs are connected, self-complementary, strongly regular graphs with parameters $(q, \frac{q-1}{2}, \frac{q-5}{4}, \frac{q-1}{4})$ (cf. \cite{Jones}). Naturally, one can generalize the definition of Paley graphs to define edges in terms of higher order powers. For integers $k \geq 2$, we denote the set of nonzero $k$-th powers in $\F_q$ by
\begin{equation}\label{Eqn:kResidues}
(\F_q^{\times})^k = \{\alpha \in \F_q^{\times} \mid \alpha = \beta^k  \ \text{for some} \ \beta \in \F_q^{\times}\}.
\end{equation}

\begin{definition}\label{Def:kPaleyGraphs}
Let $k \geq 2$ and suppose $q=p^n$ is a prime power such that $q \equiv 1 \bmod 2k$. Then the {\it generalized Paley graph of order} $q$ {\it with} $k$-{\it th powers} is defined to be the graph $\mathcal{P}(q, k)=(V,E)$ with vertex set $V=\F_q$ and edge set $E=\{xy \mid x-y \in (\F_q^{\times})^k\}$.
\end{definition}

Note that one can consider $k=1$ in \eqref{Eqn:kResidues} and Definition \ref{Def:kPaleyGraphs}. In this case the set of nonzero $1$-st powers in $\F_q$ is simply the multiplicative group $\F_q^{\times}$ and therefore $\Pg(q,1)=K_{q}$ is the complete graph on $q$ vertices. For simplicity in our presentation we will sometimes refer to complete graphs as generalized Paley graphs. For $k=2$, the generalized Paley graphs $\mathcal{P}(q,2)$ are quadratic Paley graphs as defined in Definition \ref{Def:QPaleyGraphs}. It is natural to refer to generalized Paley graphs with $k=3$ as cubic, $k=4$ as quartic, $k=5$ as quintic, and so on. Throughout this work we refer to Paley graphs with edges between vertices that differ by $k$-th powers simply as generalized Paley graphs or $k$-Paley graphs. 

We also note that the set of nonzero $k$-th powers of a finite field $\F_q$ form a subgroup of $\F_q^{\times}$. Moreover, if $\theta$ is a generator for the multiplicative cyclic group $\F_q^{\times}$, then $\theta^k$ generates the subgroup $(\F_q^{\times})^k$ of $k$-th powers. Hence, since $\vert \theta \vert = q-1$ and we are considering finite fields $\F_q$ where $k \mid q-1$ , it follows that $\vert \theta^k \vert = \frac{q-1}{k}$ and therefore $\frac{q-1}{k}$ elements of $\F_q^{\times}$ are $k$-th powers, or equivalently, the subgroup of nonzero $k$-th powers in $\F_q^{\times}$ has order $\frac{q-1}{k}$.
 
The condition $q \equiv 1 \bmod 2k$ in Definition \ref{Def:kPaleyGraphs} ensures that $q-1$ is even, so $q=p^n$ must be an odd prime power for the generalized Paley graphs under consideration. It also guarantees that generalized Paley graphs are undirected. In other words, the condition $2k \mid q-1$ guarantees that if $x-y \in (\F_q^{\times})^k$ for some $x, y \in \F_q$, then $y-x \in (\F_q^{\times})^k$, or equivalently that $-1 \in (\F_q^{\times})^k$. Indeed, if $\theta$ is a generator for the multiplicative group $\F_q^{\times}$, then
 $$1=(\theta^{\frac{q-1}{2}})^2$$
and therefore $\theta^{\frac{q-1}{2}}=-1$. 
But then since $\frac{q-1}{2k} \in \Z$, it follows that $-1 = (\theta^{\frac{q-1}{2k}})^k \in(\F_q^{\times})^k$. 

Generalized Paley graphs retain some of the basic properties shared by quadratic Paley graphs. For example, generalized Paley graphs $\Pg(q,k)$ are symmetric as they are easily seen to be arc-transitive under the subgroup of affine automorphims of the form $x \mapsto ax+b$ where $a \in (\F_q^{\times})^k$ and $b \in \F_q$. Moreover, $k$-Paley graphs are $\frac{q-1}{k}$-regular. For completeness, we record and prove these results in the following propositions.

\begin{proposition}\label{Prop:Symmetry}
Generalized Paley graphs $\mathcal{P}(q, k) = (V,E)$ of order $q=p^n$ are symmetric. 
\end{proposition}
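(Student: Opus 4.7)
The plan is to verify the hint already given in the paragraph preceding the proposition: the collection of affine maps $\phi_{a,b}\colon \F_q \to \F_q$ defined by $\phi_{a,b}(x) = ax + b$ with $a \in (\F_q^{\times})^k$ and $b \in \F_q$ forms a group of graph automorphisms of $\Pg(q,k)$ that acts transitively on the arcs of $\Pg(q,k)$, which is the definition of a symmetric graph.

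First I would show each $\phi_{a,b}$ is a graph automorphism. Bijectivity on $V = \F_q$ is immediate since $a \neq 0$. For edge preservation, the key identity is
$$\phi_{a,b}(x) - \phi_{a,b}(y) = a(x - y).$$
Since $(\F_q^{\times})^k$ is a subgroup of $\F_q^{\times}$, as noted in the paragraph following Definition \ref{Def:kPaleyGraphs}, multiplication by $a \in (\F_q^{\times})^k$ permutes $(\F_q^{\times})^k$. Hence $x - y \in (\F_q^{\times})^k$ if and only if $a(x-y) \in (\F_q^{\times})^k$, so $\phi_{a,b}$ preserves adjacency.

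For arc-transitivity, given any two arcs $(x_1,y_1)$ and $(x_2,y_2)$ in $\Pg(q,k)$, I would solve the system $\phi_{a,b}(x_1) = x_2$ and $\phi_{a,b}(y_1) = y_2$ explicitly by setting
$$a = \frac{x_2 - y_2}{x_1 - y_1}, \qquad b = x_2 - a x_1.$$
By definition of the edge set, both $x_1 - y_1$ and $x_2 - y_2$ lie in $(\F_q^{\times})^k$, and since this set is a multiplicative subgroup, the ratio $a$ also lies in $(\F_q^{\times})^k$. Therefore $\phi_{a,b}$ belongs to our family of automorphisms and carries the first arc to the second, completing the argument.

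There is no real obstacle here: the whole argument rests on the observations that $(\F_q^{\times})^k \leq \F_q^{\times}$ is a subgroup and that affine maps are edge-preserving via the displayed difference identity. The same construction specializes to transitivity on vertices (take any edge fixed in the target and vary $b$) and on edges, which are packaged together in the stronger arc-transitivity statement.
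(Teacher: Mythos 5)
Your proposal is correct and follows essentially the same route as the paper: both exhibit the affine map $\phi(x)=ax+b$ with $a=(y_2-x_2)(y_1-x_1)^{-1}\in(\F_q^{\times})^k$ and $b=x_2-ax_1$ carrying one arc to another, and conclude arc-transitivity. Your version merely spells out the automorphism verification (the difference identity and closure of $(\F_q^{\times})^k$ under multiplication) that the paper leaves implicit.
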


\begin{proof}
For any edges $x_1y_1, x_2y_2 \in E$, taking $$a=(y_2-x_2)(y_1-x_1)^{-1} \in (\F_q^{\times})^k \ \ \text{and} \ \ b = x_2 - a x_1 \in \F_q^{\times},$$
it follows that the automorphism $\phi:V \to V$ defined by $\phi(x) = ax+b$ satisfies $\phi(x_1) = x_2$ and $\phi(y_1) = y_2.$ Thus, $\Pg(q,k)$ is arc-transitive and therefore symmetric.
\end{proof}

\begin{proposition}\label{Prop:Regular}
Generalized Paley graphs $\mathcal{P}(q, k) = (V,E)$ of order $q=p^n$ are $\frac{q-1}{k}$-regular. 
\end{proposition}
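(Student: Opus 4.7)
The plan is to show that every vertex of $\Pg(q,k)$ has exactly $\frac{q-1}{k}$ neighbors by identifying the neighborhood of an arbitrary vertex with the multiplicative subgroup $(\F_q^\times)^k$ of nonzero $k$-th powers, whose order was already determined in the discussion immediately preceding Proposition \ref{Prop:Symmetry}.

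First I would fix an arbitrary vertex $v \in V = \F_q$ and observe, directly from Definition \ref{Def:kPaleyGraphs}, that $w \in V$ is adjacent to $v$ if and only if $w - v \in (\F_q^\times)^k$. This means the map $\alpha \mapsto v + \alpha$ defines a bijection between $(\F_q^\times)^k$ and the neighborhood $\Gamma(v)$ of $v$. Consequently $\vert \Gamma(v) \vert = \vert (\F_q^\times)^k \vert$, and since this count does not depend on the choice of $v$, the graph is regular.

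Next I would invoke the computation of the order of $(\F_q^\times)^k$ from the paragraph preceding Proposition \ref{Prop:Symmetry}: choosing a generator $\theta$ of the cyclic group $\F_q^\times$ of order $q-1$, the element $\theta^k$ generates $(\F_q^\times)^k$ and has order $\frac{q-1}{k}$ since $k \mid q-1$ by the standing hypothesis $q \equiv 1 \bmod 2k$. Combining with the previous step yields $\vert \Gamma(v) \vert = \frac{q-1}{k}$ for every $v \in V$, which is precisely the assertion.

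There is no real obstacle here; the statement is essentially a bookkeeping consequence of the construction and the order formula for $(\F_q^\times)^k$. (As an alternative, one could instead cite the arc-transitivity from Proposition \ref{Prop:Symmetry}, which forces vertex-transitivity and hence regularity, and then compute the degree of a single vertex as above; but the direct bijective argument is the cleanest presentation.)
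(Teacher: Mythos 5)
Your proof is correct and follows essentially the same route as the paper's: both identify $\Gamma(v)$ with the translate $v + (\F_q^\times)^k$ and conclude from $\vert (\F_q^\times)^k \vert = \frac{q-1}{k}$, the paper phrasing this via $y = x + (\theta^k)^m$ and you via the explicit bijection $\alpha \mapsto v + \alpha$. No gaps.
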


\begin{proof}
Let $\theta$ be a generator for $\F_q^{\times}$ and suppose $x \in V$. Then for any other vertex $y \in V$, it follows that $y \in \Gamma(x)$ if and only if $y-x \in (\F_q^{\times})^k$. Therefore, since $\theta^k$ generates the subgroup of nonzero $k$-th powers, it follows that $y \in \Gamma(x)$ if and only if $y = x + (\theta^k)^m$ for some $m \in \Z$. Thus, since $\vert \theta^k \vert = \frac{q-1}{k}$, we see that $\vert \Gamma(x) \vert = \frac{q-1}{k}$. Hence, since $x$ was chosen arbitrarily, it follows that $\mathcal{P}(q, k)$ is $\frac{q-1}{k}$-regular.   
\end{proof}

In contrast to the case for quadratic Paley graphs, generalized Paley graphs $\Pg(q,k)$ are not self-complementary for $k > 2$. This follows directly from the fact that self-complementary graphs with $q$ vertices must have $\frac{q(q-1)}{4}$ edges, that is, half the number of edges as in a complete graph on $q$ vertices. But $\Pg(q,k)$ is $\frac{q-1}{k}$-regular by Proposition \ref{Prop:Regular} and therefore has
$$\frac{1}{2} \cdot q \cdot \frac{q-1}{k} = \frac{q(q-1)}{2k}$$
edges, which is less than $\frac{q(q-1)}{4}$ for $k>2$. Furthermore, generalized Paley graphs $\Pg(q,k)$ may not be connected when $k > 2$. However, even when a generalized Paley graph is disconnected, it turns out that each of its connected components is isomorphic to a single generalized Paley graph defined over a proper subfield of $\F_q$. These latter properties are due to Theorem 2.2 of \cite{LimP}. We restate the relevant portions of this theorem in the context of our work below.

\begin{theorem}[Theorem 2.2 \cite{LimP}]\label{Thm:LimP}
For $k \geq 2$, the generalized Paley graph $\mathcal{P}(q, k)$ of order $q=p^n$ is connected if and only if $k$ is not a multiple of $(q-1)/(p^a-1)$ for any proper divisor $a$ of $n$. Furthermore, if $\mathcal{P}(q, k)$ is disconnected, then each connected component is isomorphic to the generalized Paley graph $\mathcal{P}(p^a, k')$ with $k' = k (p^a-1)/(q-1) \geq 1$ where $a$ is some proper divisor of $n$ such that $(q-1)/(p^a-1)$ divides $k$.
\end{theorem}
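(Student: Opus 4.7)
My plan is to exploit the Cayley graph structure of $\Pg(q,k)$ together with the subgroup lattice of the multiplicative group $\F_q^\times$. Set $S = (\F_q^\times)^k$ for brevity. Since $\Pg(q,k)$ is the Cayley graph on the additive group of $\F_q$ with connection set $S$, it is vertex-transitive via additive translations, so every connected component is isomorphic to the component $H$ containing $0$. Moreover, $H$ is precisely the additive subgroup of $\F_q$ generated by $S$, since a vertex $v$ lies in $H$ if and only if $v$ is expressible as a sum $s_1 + s_2 + \cdots + s_r$ with $s_i \in S$; the subset $S$ is symmetric as noted in the preceding text, so this is genuinely a subgroup.

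The key structural observation I would prove next is that $H$ is in fact a subfield of $\F_q$. Since $S$ is closed under multiplication, for any $a = \sum_i s_i$ and $b = \sum_j t_j$ in $H$ the product $ab = \sum_{i,j} s_i t_j$ is again a sum of elements of $S$ and hence lies in $H$. Thus $H$ is a subring of $\F_q$, and being a finite integral domain it is a subfield, so $H = \F_{p^a}$ for the unique divisor $a$ of $n$ with $\vert H \vert = p^a$.

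With the subfield identified, both directions of the connectivity criterion follow from the uniqueness of subgroups of each order in a finite cyclic group. The subgroup $S$ has order $(q-1)/k$, while $\F_{p^a}^\times$ is the unique subgroup of $\F_q^\times$ of order $p^a-1$; hence $S \subseteq \F_{p^a}^\times$ if and only if $(q-1)/k$ divides $p^a-1$, equivalently $(q-1)/(p^a-1)$ divides $k$. Consequently $\Pg(q,k)$ is disconnected exactly when some proper divisor $a$ of $n$ satisfies $(q-1)/(p^a-1) \mid k$, which is the stated connectivity criterion.

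Finally, to identify each component with a smaller generalized Paley graph, observe that the component $H = \F_{p^a}$ carries the Cayley graph on $\F_{p^a}$ with connection set $S$, so the task reduces to recognizing $S$ as the set of $k'$-th powers in $\F_{p^a}^\times$. Since $S$ is a subgroup of $\F_{p^a}^\times$ of index $k' = k(p^a-1)/(q-1) \geq 1$, uniqueness of subgroups in the cyclic group $\F_{p^a}^\times$ forces $S = (\F_{p^a}^\times)^{k'}$, so each component is isomorphic to $\Pg(p^a, k')$. The main obstacle I anticipate is the middle step: one has to recognize that the purely additive closure of the multiplicatively defined set $S$ yields a subfield. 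Once that subfield structure is in hand, the rest of the argument is straightforward bookkeeping with orders of cyclic subgroups.
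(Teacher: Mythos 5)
This statement is Theorem 2.2 of Lim--Praeger, which the paper imports verbatim and does not prove, so there is no internal proof to compare against; your proposal is a self-contained derivation, and it is essentially correct. The skeleton is the standard one: $\Pg(q,k)$ is a Cayley graph on $(\F_q,+)$ with symmetric connection set $S=(\F_q^\times)^k$, translations show all components are isomorphic to the component $H$ of $0$, $H$ is the additive span of $S$, and since $S$ is multiplicatively closed this span is a subring, hence (being a finite integral domain containing $1=1^k$) a subfield $\F_{p^a}$; the divisibility criterion then drops out of the order bookkeeping in the cyclic group $\F_q^\times$, using that $(q-1)/k \mid p^a-1$ is equivalent to $(q-1)/(p^a-1)\mid k$. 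Two small points you should make explicit if you write this up. First, in the last step you assert $S=(\F_{p^a}^\times)^{k'}$ ``by uniqueness of subgroups''; this needs the observation that $(\F_{p^a}^\times)^{k'}$ has order $(p^a-1)/\gcd(k',p^a-1)$, and that $k'\mid p^a-1$ (which follows from Lagrange applied to $S\leq \F_{p^a}^\times$, since $|S|=(p^a-1)/k'$ is an integer dividing $p^a-1$), so that the two subgroups really do have the same order. Second, to claim the component is literally a generalized Paley graph in the sense of the paper's Definition~\ref{Def:kPaleyGraphs} one should check $p^a\equiv 1\bmod 2k'$; this is exactly the computation the paper records in \eqref{Eqn:k'Divides}, and the degenerate case $k'=1$ gives the complete graph $K_{p^a}$, consistent with the statement's allowance $k'\geq 1$. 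With those two remarks added, your argument is a complete and correct proof of the cited theorem.
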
 

As noted in the introduction, the divisibility conditions in Theorem \ref{Thm:LimP} imply that a generalized Paley graph $\mathcal{P}(q, k)$ is connected if and only if no proper subfield of $\F_q$ contains the subgroup of nonzero $k$-th powers. Indeed, since $k \mid q-1$, it follows that $(q-1)/(p^a-1)$ divides $k$ if and only if $\vert (\F_q^{\times})^k \vert = \frac{q-1}{k}$ divides $\vert \F_{p^a}^{\times} \vert = p^a-1$, or equivalently, $(\F_q^{\times})^{k} \leq \F_{p^a}^{_{\times}}$. Therefore, it is natural to expect that each of the connected components of a disconnected generalized Paley graph $\mathcal{P}(q, k)$ is isomorphic to a generalized Paley graph defined over the smallest subfield of $\F_q$ that contains the subgroup of nonzero $k$-th powers. 

Now if $q=p^n$ is a prime power such that $q \equiv 1 \bmod 2k$ and $b$ is a proper divisor of $n$ for which $(q-1)/(p^b-1)$ divides $k$, then $k = k'(q-1)/(p^b-1)$ for some $k' \in \Z$ so 
\begin{equation}\label{Eqn:k'Divides}
p^b-1= k' \frac{q-1}{k} =2k' \frac{q-1}{2k}.
\end{equation}
But $\frac{q-1}{2k} \in \Z$ so by \eqref{Eqn:k'Divides} it follows that $2k' \mid p^b-1$ and therefore $p^b \equiv 1 \bmod 2k'$. Thus, one obtains a generalized Paley graph $\Pg(p^b,k')$ over the proper subfield $\F_{p^b}$ of $\F_q$ for every proper divisor $b$ of $n$ for which $(q-1)/(p^b-1)$ divides $k$. However, if $b$ is not the smallest such divisor of $n$, that is if $\F_{p^b}$ is not the smallest subfield of $\F_q$ containing the subgroup of nonzero $k$-th powers, then it turns out that $\Pg(p^b,k')$ is also disconnected.

Although it is not explicitly stated in \cite{LimP}, a further analysis shows that each of the connected components of a disconnected generalized Paley graph $\Pg(q,k)$ is actually isomorphic to the generalized Paley graph $\mathcal{P}(p^a, k')$ as described in Theorem \ref{Thm:LimP} where $a$ is in fact the smallest proper divisor of $n$ such that $(q-1)/(p^a-1)$ divides $k$, or equivalently, where $\F_{p^a}$ is the smallest subfield of $\F_q$ containing the subgroup of nonzero $k$-th powers.

\begin{theorem}\label{Thm:LimPExtension}
Let $k \geq 2$ and suppose the generalized Paley graph $\mathcal{P}(q, k)$ of order $q=p^n$ is disconnected. Then each connected component of $\mathcal{P}(q, k)$ is isomorphic to the generalized Paley graph $\mathcal{P}(p^a, k')$ with $k' = k (p^a-1)/(q-1) \geq 1$ where $a$ is the smallest proper divisor of $n$ such that $(q-1)/(p^a-1)$ divides $k$.
\end{theorem}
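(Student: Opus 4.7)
The plan is to identify the connected component of $0$ in $\Pg(q,k)$ directly with a specific subfield of $\F_q$, and then read off both the value of $a$ and the isomorphism type of each component from that subfield. By the vertex-transitivity established in Proposition \ref{Prop:Symmetry}, all connected components are isomorphic and share the same size, so it suffices to analyze the component $C_0$ of the vertex $0$.

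First I would observe that $C_0$ is exactly the additive subgroup $H \leq \F_q$ generated by the symmetric connection set $(\F_q^{\times})^k$. Fixing a primitive element $\theta$ of $\F_q$, so that $(\F_q^{\times})^k = \langle \theta^k \rangle$, the containment $H \subseteq \F_p(\theta^k)$ is clear since $\F_p(\theta^k)$ is a subfield closed under addition that contains every power of $\theta^k$. For the reverse containment, every element of $\F_p[\theta^k] = \F_p(\theta^k)$ is an $\F_p$-linear combination of powers of $\theta^k$, and each summand $c_j \theta^{kj}$ is an integer multiple of the element $\theta^{kj} \in (\F_q^{\times})^k$, which therefore belongs to $H$ (using also $-1 \in (\F_q^{\times})^k$ to handle negative coefficients). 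Hence $C_0 = H = \F_p(\theta^k) = \F_{p^a}$, where $a := [\F_p(\theta^k):\F_p]$.

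Next I would translate $a$ into the divisibility language of Theorem \ref{Thm:LimP}. By construction, $\F_{p^a}$ is the smallest subfield of $\F_q$ containing $(\F_q^{\times})^k$. For any proper divisor $b$ of $n$, since $\F_q^{\times}$ is cyclic and subgroups of a finite cyclic group are determined by their order, the containment $(\F_q^{\times})^k \leq \F_{p^b}^{\times}$ is equivalent to $|(\F_q^{\times})^k| = (q-1)/k$ dividing $|\F_{p^b}^{\times}| = p^b-1$, which is in turn equivalent to $(q-1)/(p^b-1)$ dividing $k$; thus $a$ is precisely the smallest proper divisor of $n$ satisfying this divisibility condition. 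Finally, to identify the induced subgraph on $\F_{p^a}$ with $\Pg(p^a,k')$, I would use that both $(\F_q^{\times})^k$ and $(\F_{p^a}^{\times})^{k'}$ are subgroups of the cyclic group $\F_{p^a}^{\times}$ of common order $(q-1)/k = (p^a-1)/k'$, hence they coincide, so the edge conditions $x-y \in (\F_q^{\times})^k$ and $x-y \in (\F_{p^a}^{\times})^{k'}$ agree on $\F_{p^a}$.

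The main obstacle is the additive-to-multiplicative closure step identifying the additive subgroup $H$ with the subfield $\F_p(\theta^k)$; once this identification is in hand, both the minimality of $a$ and the value of $k'$ follow cleanly from the subfield correspondence for $\F_q$ and from the uniqueness of subgroups of a given order in a finite cyclic group.
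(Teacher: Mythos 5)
Your proof is correct, but it takes a genuinely different route from the paper's. The paper treats Theorem \ref{Thm:LimP} of Lim and Praeger as a black box: it assumes some proper divisor $b$ of $n$ satisfies the divisibility condition, uses the subfield lattice ($\F_{p^a}\cap\F_{p^b}=\F_{p^{\gcd(a,b)}}$) to show that the minimal such divisor $a$ divides every other one, and then rules out every non-minimal $b$ by checking that $(p^b-1)/(p^a-1)$ divides $k'_b$, so that $\Pg(p^b,k'_b)$ would itself be disconnected and hence cannot be a connected component. Your argument is self-contained and more structural: since $\Pg(q,k)$ is a Cayley graph on $(\F_q,+)$ with symmetric connection set $(\F_q^{\times})^k$, the component of $0$ is the additive subgroup generated by that set, which you correctly identify with $\F_p[\theta^k]=\F_p(\theta^k)=\F_{p^a}$; the minimality of $a$, the value of $k'$, and the identification of the induced subgraph with $\Pg(p^a,k')$ (via uniqueness of subgroups of a given order in the cyclic group $\F_{p^a}^{\times}$) then all fall out. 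In effect you reprove the Lim--Praeger connectivity criterion rather than citing it, and your identification of the component of $0$ with $\F_p(\theta^k)$ is exactly the observation the paper records separately as Theorem \ref{Thm:LimPExtension2}. Two small points are worth making explicit: the passage from ``smallest subfield containing $(\F_q^{\times})^k$'' to ``numerically smallest proper divisor of $n$ with the divisibility property'' uses that the set of degrees of subfields containing a given set is closed under $\gcd$, so its divisibility-minimal element is also its numerically minimal element; and one should note, as the paper does in \eqref{Eqn:k'Divides}, that $2k'\mid p^a-1$, so that $\Pg(p^a,k')$ is a legitimate undirected generalized Paley graph when $k'>1$.
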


\begin{proof}
By Theorem \ref{Thm:LimP} we may assume that $(q-1)/(p^b-1)$ divides $k$ for some proper divisor $b$ of $n$. Equivalently, since $k \mid q-1$, we may write this divisibility condition as $\frac{q-1}{k} \mid p^b-1$. Therefore, since $\F_{p^b}^{\times}$ is cyclic and $\vert (\F_q^{\times})^k \vert = \frac{q-1}{k}$ divides $\vert \F_{p^b}^{\times} \vert = p^b-1$, it follows that
\begin{equation}\label{Eqn:kResContainment} 
(\F_q^{\times})^k \leq \F_{p^{b}}^{\times}.
\end{equation}

Let $a$ be the smallest proper divisor of $n$ such that $(q-1)/(p^a-1)$ divides $k$. If $a < b$, it follows that $(\F_q^{\times})^k \leq \F_{p^a}^{\times}$ as in \eqref{Eqn:kResContainment} and therefore
\begin{equation}\label{Eqn:gcdContainment} 
(\F_q^{\times})^k \leq \F_{p^a}^{\times} \cap \F_{p^b}^{\times} = \F_{p^d}^{\times}
\end{equation}
where $d=\operatorname{gcd}(a,b)$. Thus, by Lagrange's theorem, it follows from \eqref{Eqn:gcdContainment} that $\frac{q-1}{k} \mid p^d-1$, or equivalently
$(q-1)/(p^d-1)$ divides $k$ where $d=\operatorname{gcd}(a,b) \leq a$ is a proper divisor of $n$. But since $a$ is the smallest proper divisor of $n$ such that $(q-1)/(p^a-1)$ divides $k$, it follows that $a=d$ and therefore $a \mid b$.

Now let $k'_a = k(p^a-1)/(q-1)$ and $k'_b = k(p^b-1)/(q-1)$. Then
$$k'_b= k\frac{p^b-1}{q-1}=k'_a \frac{q-1}{p^a-1} \frac{p^b-1}{q-1} = k'_a \frac{p^b-1}{p^a-1}$$
and therefore $(p^b-1)/(p^a-1)$ divides $k'_b.$ Hence, since $a$ is a proper divisor of $b$, it follows from Theorem \ref{Thm:LimP} that the generalized Paley graph $\Pg(p^b,k'_b)$ is disconnected. Thus, in light of Theorem \ref{Thm:LimP}, the connected components of a disconnected generalized Paley graph $\Pg(q,k)$ must be isomorphic to the generalized Paley graph $\Pg(p^a,k')$ with $k' = k (p^a-1)/(q-1) \geq 1$ where $a$ is the smallest proper divisor of $n$ such that $(q-1)/(p^a-1)$ divides $k$, or equivalently, where $\F_{p^a}$ is the smallest subfield of $\F_q$ containing the subgroup of nonzero $k$-th powers.
\end{proof}

Now suppose that $\theta$ is a primitive element of the finite field $\F_q$ of order $q=p^n$ with $q \equiv 1 \bmod 2k$. Then the smallest subfield of $\F_q$ that contains the subgroup of nonzero $k$-th powers is given by the field extension $\F_p(\theta^k)$ over $\F_p$. Thus, if the degree of this extension is strictly less than $n$, say
$$[\F_p(\theta^k):\F_p] = a < n,$$
it follows that $\F_p(\theta^k) = \F_{p^a}$ is a proper subfield of $\F_q$ where $a$ is the smallest proper divisor of $n$ such that $\vert (\F_q^{\times})^k \vert = \frac{q-1}{k}$ divides $\vert \F_p(\theta^k)^{\times} \vert = p^a-1$, or equivalently, such that $(q-1)/(p^a-1)$ divides $k$. As a result of these observations, we see that the parameter $a$ in Theorem 2.5 of \cite{LimP} and Theorem 2.6 is precisely the degree of the field extension $\F_p(\theta^k)$ over $\F_p$, or equivalently, the degree of the minimal polynomial of $\theta^k$ over $\F_p$. Therefore, we have the following reformulation of the results of \cite{LimP} and Theorem 2.6.

\begin{theorem}\label{Thm:LimPExtension2}
Let $k \geq 2$ and suppose that $\theta$ is a primitive element of the finite field $\F_q$ of order $q=p^n$ with $q \equiv 1 \bmod 2k$. Then the generalized Paley graph $\mathcal{P}(q, k)$ is disconnected if and only if the field extension $\F_p(\theta^k)$ is a proper subfield of $\F_q$. Furthermore, if $\mathcal{P}(q, k)$ is disconnected, then each connected component is isomorphic to the generalized Paley graph $\mathcal{P}(p^a, k')$ with
$$k' = \frac{\vert \F_p(\theta^k)^{\times} \vert}{\vert (\F_q^{\times})^k \vert} \geq 1$$
where $a < n$ is the degree of the extension $\F_p(\theta^k)$ over $\F_p$.
\end{theorem}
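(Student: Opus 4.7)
The plan is to show that this theorem is essentially a restatement of Theorem \ref{Thm:LimPExtension} (equivalently, Theorem \ref{Thm:IntroLimPExtension}) in terms of the field extension $\F_p(\theta^k)$. The key observation, already foreshadowed in the paragraph preceding the statement, is that the smallest subfield of $\F_q$ containing the multiplicative subgroup $(\F_q^{\times})^k$ is precisely $\F_p(\theta^k)$, so the two descriptions of the parameter $a$ must coincide.

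First I would verify that $\F_p(\theta^k)$ really is the smallest subfield of $\F_q$ containing $(\F_q^{\times})^k$. Since $\theta$ generates $\F_q^{\times}$, the element $\theta^k$ generates the cyclic subgroup $(\F_q^{\times})^k$, so any subfield of $\F_q$ containing $(\F_q^{\times})^k$ must contain $\theta^k$ and hence contain the simple extension $\F_p(\theta^k)$. Conversely, $\F_p(\theta^k)$ clearly contains all powers of $\theta^k$ and hence contains $(\F_q^{\times})^k$, which proves minimality. Writing $a = [\F_p(\theta^k):\F_p]$, we then have $\F_p(\theta^k) = \F_{p^a}$ with $a \mid n$.

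Next I would match the parameter $a$ with the one appearing in Theorem \ref{Thm:LimPExtension}. Because $k \mid q-1$, the divisibility condition $(q-1)/(p^b-1) \mid k$ (for a proper divisor $b$ of $n$) is equivalent to $\lvert (\F_q^{\times})^k \rvert = (q-1)/k$ dividing $\lvert \F_{p^b}^{\times}\rvert = p^b-1$, which by Lagrange is equivalent to $(\F_q^{\times})^k \leq \F_{p^b}^{\times}$. Thus the smallest proper divisor $a$ of $n$ for which $(q-1)/(p^a-1)$ divides $k$ is exactly the smallest proper divisor $a$ of $n$ with $\F_p(\theta^k) \subseteq \F_{p^a}$, which by the minimality established above is $a = [\F_p(\theta^k):\F_p]$. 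This matching yields the biconditional part: $\mathcal{P}(q,k)$ is disconnected if and only if \emph{some} proper divisor $a$ of $n$ with this property exists, which is equivalent to $\F_p(\theta^k)$ being a proper subfield of $\F_q$.

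Finally, for the description of the connected components when $\mathcal{P}(q,k)$ is disconnected, I would substitute directly into the formula from Theorem \ref{Thm:LimPExtension}:
$$k' = \frac{k(p^a-1)}{q-1} = \frac{p^a-1}{(q-1)/k} = \frac{\lvert \F_p(\theta^k)^{\times}\rvert}{\lvert (\F_q^{\times})^k\rvert},$$
and note that $k' \geq 1$ follows from $(\F_q^{\times})^k \leq \F_p(\theta^k)^{\times}$. I expect no serious obstacle here; the proof is essentially a translation exercise, and the only thing requiring minor care is the identification of the minimality statements (smallest divisor $a$ versus smallest subfield containing $(\F_q^{\times})^k$), which is handled by the Lagrange argument above.
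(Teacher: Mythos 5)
Your proposal is correct and follows essentially the same route as the paper: the paper likewise observes that $\F_p(\theta^k)$ is the smallest subfield of $\F_q$ containing $(\F_q^{\times})^k$ (since $\theta^k$ generates that subgroup), identifies the parameter $a$ of Theorem \ref{Thm:LimPExtension} with $[\F_p(\theta^k):\F_p]$ via the equivalence $(q-1)/(p^a-1)\mid k \iff (\F_q^{\times})^k\leq \F_{p^a}^{\times}$ from Lagrange's theorem and cyclicity, and then rewrites $k'$ accordingly. No gaps.
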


With these facts in hand, we focus our attention on connected generalized Paley graphs as our results will easily lend themselves to the connected components of disconnected generalized Paley graphs. One simple case where we can guarantee that generalized Paley graphs are connected is given in the following theorem.

\begin{theorem}\label{Thm:SimpleConnected}
Let $k \geq 2$ and suppose that $q=p^n$ is a prime power such that $q \equiv 1 \bmod 2k$. If $k < p^{\frac{n}{2}}+1$, then the generalized Paley graph $\Pg(q,k)$ of order $q=p^n$ is connected.
\end{theorem}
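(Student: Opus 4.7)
The plan is to argue by contraposition using Theorem \ref{Thm:LimP}. Suppose that $\Pg(q,k)$ is disconnected. Then by Theorem \ref{Thm:LimP} there exists a proper divisor $a$ of $n$ such that $(q-1)/(p^a-1)$ divides $k$. In particular we obtain the lower bound
$$k \geq \frac{q-1}{p^a-1} = \frac{p^n-1}{p^a-1}.$$
The goal is to show this forces $k \geq p^{n/2}+1$, which is the desired contradiction.

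The key arithmetic step is the observation that any proper divisor $a$ of $n$ satisfies $a \leq n/2$. Indeed, if $a \mid n$ and $a < n$, then $n/a \geq 2$, so $a \leq n/2$. Applied to the divisor produced by Theorem \ref{Thm:LimP}, this yields $p^a - 1 \leq p^{n/2}-1$ (as real numbers), and hence
$$k \geq \frac{p^n-1}{p^a-1} \geq \frac{p^n-1}{p^{n/2}-1} = p^{n/2}+1,$$
using the factorization $p^n - 1 = (p^{n/2}-1)(p^{n/2}+1)$. This contradicts the hypothesis $k < p^{n/2}+1$, so $\Pg(q,k)$ must be connected.

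I do not anticipate any real obstacle here: once one invokes Theorem \ref{Thm:LimP} the problem reduces to an elementary estimate, with the only small subtlety being the bound $a \leq n/2$ on proper divisors and the use of the real-variable factorization above (which is valid whether or not $n/2$ is itself an integer, since we only need the inequality $p^a-1\leq p^{n/2}-1$).
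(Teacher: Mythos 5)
Your proposal is correct and is essentially the paper's own argument: both rest on Theorem \ref{Thm:LimP} together with the estimate $\frac{p^n-1}{p^a-1} \geq \frac{p^n-1}{p^{n/2}-1} = p^{n/2}+1 > k$ for any proper divisor $a$ of $n$. The only difference is presentational (you phrase it as a contraposition, the paper argues directly that $k$ cannot be a multiple of any such quotient), so there is nothing substantive to add.
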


\begin{proof}
Suppose that $a$ is a proper divisor of $n$. Then
$$\frac{p^{n}-1}{p^a-1} \geq \frac{p^{n}-1}{p^{\frac{n}{2}}-1} = p^{\frac{n}{2}}+1 > k.$$
Hence, $k$ is not a multiple of $(p^n-1)/(p^a-1)$ for any proper divisor $a$ of $n$ and therefore $\mathcal{P}(q, k)$ is connected by Theorem \ref{Thm:LimP}.
\end{proof}

To conclude this section we present another special case where the generalized Paley graphs under consideration are connected. In particular, we show that all generalized Paley graphs $\Pg(q,k)$ of order $q=p^{km}$ are connected. This result contributes to a complete generalization of the observations outlined in \cite{B2} for quadratic Paley graphs $\Pg(q,2)$ of even power order $q=p^{2m}$.

\begin{theorem}\label{Thm:kmConnected}
Let $m\geq 1$, $k \geq 2$ and suppose that $q=p^{km}$ is a prime power such that $q \equiv 1 \bmod 2k$. Then the generalized Paley graph $\mathcal{P}(q, k)$ of order $q=p^{km}$ is connected. 
\end{theorem}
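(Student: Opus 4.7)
The plan is to deduce this from Theorem \ref{Thm:SimpleConnected} by showing that its hypothesis $k < p^{n/2}+1$ is automatically satisfied when $n = km$. Since Theorem \ref{Thm:SimpleConnected} is already stated and proved in the excerpt, the only work is a one-variable inequality.

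First I would observe that the condition $q = p^{km} \equiv 1 \bmod 2k$ forces $q$ to be odd, hence $p$ to be an odd prime, so $p \geq 3$. It therefore suffices to verify $k \leq p^{km/2}$, and since $p \geq 3$ and $m \geq 1$, this will follow from the universal bound $k \leq 3^{k/2}$ valid for every integer $k \geq 2$.

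Next I would verify $k \leq 3^{k/2}$ by the straightforward calculus argument alluded to in the introduction. Taking natural logarithms, this becomes
$$\frac{\ln k}{k} \;\leq\; \frac{\ln 3}{2}.$$
The function $f(t) = (\ln t)/t$ is easily seen, via $f'(t) = (1 - \ln t)/t^2$, to attain its maximum on $(0, \infty)$ at $t = e$ with maximal value $1/e$. Since $1/e \approx 0.368 < 0.549 \approx (\ln 3)/2$, the displayed inequality holds for every real $k > 0$, and in particular for every integer $k \geq 2$.

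Putting these pieces together yields
$$k \;\leq\; 3^{k/2} \;\leq\; p^{km/2} \;<\; p^{km/2} + 1 \;=\; p^{n/2} + 1,$$
so Theorem \ref{Thm:SimpleConnected} applies and $\mathcal{P}(q,k)$ is connected. I do not anticipate a genuine obstacle here; the only subtlety is making sure to invoke the mod-$2k$ hypothesis early in order to rule out $p = 2$, since the analog inequality $k \leq 2^{k/2}$ fails at $k = 3$ and would require a separate argument (which is ultimately unnecessary because $p = 2$ is incompatible with $q \equiv 1 \bmod 2k$).
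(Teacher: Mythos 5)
Your proposal is correct and follows essentially the same route as the paper: both reduce to Theorem \ref{Thm:SimpleConnected} by checking $k < p^{km/2}+1$ via a single-variable calculus inequality (the paper minimizes $x - 2\log_2(x-1)$, while you maximize $(\ln t)/t$, but this is an immaterial difference). Your explicit observation that $q \equiv 1 \bmod 2k$ rules out $p=2$ matches the paper's remark that $p$ is necessarily an odd prime.
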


\begin{proof}
It suffices to show that $km > 2\log_p(k-1)$ since then $p^{\frac{km}{2}}+1 > k$ and therefore $\mathcal{P}(q, k)$ is connected by Theorem \ref{Thm:SimpleConnected}. To this end, consider the function $f:(1,\infty) \to \R$ defined by
$$f(x) = x - 2\log_2(x-1) = x - \frac{2}{\ln 2}\ln{(x-1)}.$$
Then
$$f'(x) = 1 - \frac{2}{(\ln{2})(x-1)}$$
so $f$ has a single critical point at 
$$x^* = \frac{2}{\ln{2}}+1$$
where $f'<0$ on $(1,x^*)$ and $f'>0$ on $(x^*,\infty)$. Thus, $f$ is decreasing on $(1,x^*)$ and increasing on $(x^*,\infty)$. Moreover, since 
$$f(x^*) = \frac{2}{\ln{2}}\left(1-\ln{\left(\frac{2}{\ln{2}}\right)}\right)+1 > 0,$$
it follows that $f$ has a positive global minimum at $x^*$. Hence, $f > 0$ on $(1,\infty)$ and therefore $x > 2\log_2(x-1)$ for all $x \in (1,\infty)$. In particular, since $k \geq 2$ and $p$ is necessarily an odd prime, we see
\begin{equation}\label{Eqn:Ineq3}
km \geq k > 2\log_2(k-1) > 2\log_p(k-1) 
\end{equation}
and the desired result follows.
\end{proof}


\section{The Sorting Algorithm and The Global Matching Condition}\label{Sec:GlobalMatchingCond}
In this section we establish the Global Matching Condition for certain Paley graphs and their generalizations. As a consequence of Theorem \ref{Thm:IntroSmith} of \cite{Smith} we then obtain an explicit formula for the condensed Ricci curvature along the edges of these graphs. Consider a connected generalized Paley graph $\Pg(q,k)$ of order $q=p^n$ with vertex set $V = \F_q$ and edge set $E = \{xy \mid x-y \in (\F_q^{\times})^k\}$. Due to the symmetry of generalized Paley graphs we may focus on the edge $01 \in E$ and the neighbor sets 
$$N_0 = \Gamma(0) \setminus (\nabla_{01} \cup \{1\}) \quad \text{and} \quad N_1 = \Gamma(1) \setminus (\nabla_{01} \cup \{0\})$$ 
as introduced in Section \ref{Sec:Intro}. 

Let $H$ denote the induced bipartite subgraph consisting of all edges in $E$ between vertices in $N_0$ and $N_1$. Our strategy in establishing the Global Matching Condition on the generalized Paley graphs under consideration is to formally develop and generalize the sorting algorithm that was first conceived in \cite{B2} for the special case of quadratic Paley graphs. We then apply this sorting algorithm to decompose the bipartite subgraph $H$ into complete balanced bipartite subgraphs. Then one can pairwise match vertices in these subgraphs to construct a perfect matching between $N_0$ and $N_1$ and appeal to the symmetry of generalized Paley graphs to realize a perfect matching between the neighbor sets $N_x$ and $N_y$ for every edge $xy \in E$. 

On generalized Paley graphs the containment of the corresponding multiplicative group of the prime subfield in the subgroup of nonzero $k$-th powers plays a crucial role in the success of the sorting algorithm. With the aim of establishing an important case of this critical component of the sorting algorithm, we first present a special case of Theorem 9.1 in chapter $6$, section $9$ of \cite{Lang}.

\begin{theorem}[Theorem 9.1 \cite{Lang}]\label{Thm:LangIrred}
Let $\F_p$ denote the field of prime order $p$ and suppose $k$ is prime. Let $\alpha \in \F_p^{\times}$ and suppose that $\alpha \not\in (\F_p^{\times})^k$. Then $x^k - \alpha$ is irreducible in $\F_p[x].$  
\end{theorem}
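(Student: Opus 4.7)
The plan is to reduce to the non-degenerate case and then exploit the explicit factorization of $x^k - \alpha$ over an algebraic closure $\bar{\F}_p$. First I would handle two trivial situations where the hypothesis is actually vacuous: if $k = p$, Fermat's little theorem gives $\beta^p = \beta$ for every $\beta \in \F_p$, so every nonzero element is already a $k$-th power; and if $\gcd(k, p-1) = 1$, the map $x \mapsto x^k$ on the cyclic group $\F_p^\times$ has trivial kernel and is therefore surjective, again giving $(\F_p^\times)^k = \F_p^\times$. Since $k$ is prime, the only remaining case is $k \neq p$ with $k \mid p-1$. This is the productive case: $x^k - \alpha$ is separable (as $\gcd(k,p) = 1$) and, because $\F_p^\times$ is cyclic of order $p-1$, the field $\F_p$ contains a primitive $k$-th root of unity $\zeta$.

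Next I would fix a root $\beta \in \bar{\F}_p$ of $x^k - \alpha$. The full root set is then $\{\beta \zeta^i : 0 \leq i < k\}$. Suppose, for contradiction, that $x^k - \alpha = f(x)\,g(x)$ in $\F_p[x]$ with $m := \deg f$ satisfying $0 < m < k$; by monicity I may take $f$ monic, so
\[
f(0) = \prod_{i \in S}\bigl(-\beta \zeta^i\bigr) = (-1)^m \beta^m \zeta^j
\]
for some index set $S \subset \{0,1,\dots,k-1\}$ of size $m$ and some $j \in \Z$. Because $\zeta \in \F_p$ and $f(0) \in \F_p$, the leverage is that this equation forces $\beta^m \in \F_p$.

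The finishing move is a Bezout argument. Since $k$ is prime and $0 < m < k$, we have $\gcd(m,k) = 1$, so there exist $a,b \in \Z$ with $am + bk = 1$, and then
\[
\beta = (\beta^m)^a (\beta^k)^b = (\beta^m)^a \, \alpha^b \in \F_p,
\]
whence $\alpha = \beta^k \in (\F_p^\times)^k$, contradicting the hypothesis on $\alpha$. I expect the only genuine obstacle to be organizational: one must dispatch the degenerate cases $k = p$ and $\gcd(k, p-1) = 1$ up front in order to guarantee that $\zeta \in \F_p$, because only then can $\zeta^j$ be absorbed into the left-hand side to extract $\beta^m \in \F_p$. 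Once that reduction is made, the primality of $k$ does exactly the work one needs --- it ensures $\gcd(m,k) = 1$ for every possible degree $m$ of a nontrivial factor, and the Bezout step then delivers the contradiction uniformly.
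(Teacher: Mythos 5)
Your argument is correct and complete, but note that the paper itself offers no proof of this statement: it is imported verbatim as a special case of Theorem 9.1 in Chapter VI, \S 9 of Lang's \emph{Algebra}, so there is no in-paper proof to compare against. Your proof is a clean, self-contained verification of exactly the case the paper needs. Each step checks out: the two degenerate cases ($k=p$, where Frobenius/Fermat makes every element a $k$-th power, and $\gcd(k,p-1)=1$, where the $k$-power map on the cyclic group $\F_p^{\times}$ is bijective) really do make the hypothesis vacuous, and in the remaining case $k \mid p-1$ you correctly get a primitive $k$-th root of unity $\zeta \in \F_p$, so the constant term of a putative monic factor of degree $m$ is $(-1)^m\beta^m\zeta^j$, forcing $\beta^m \in \F_p$ and hence, by B\'ezout with $\gcd(m,k)=1$, $\beta \in \F_p$ and $\alpha \in (\F_p^{\times})^k$ --- a contradiction. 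The one structural difference worth noting is that Lang's general proof cannot assume $\zeta$ lies in the base field; he instead takes norms, obtaining $a^m \in (k^{\times})^p$ from the constant term and then applying B\'ezout at the level of $\alpha$ rather than $\beta$. Your reduction to the subcase $\zeta \in \F_p$ exploits the finiteness (cyclicity of $\F_p^{\times}$) and is what lets you work directly with $\beta$; this buys a more elementary argument at the cost of generality, which is perfectly appropriate for the special case stated here.
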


For the case of quadratic Paley graphs $\Pg(q,2)$ of even power order $q=p^{2m}$, it was observed in \cite{B2} that the corresponding multiplicative group of the prime subfield $\F_p^{\times}$ is contained in the subgroup of nonzero squares $(\F_{q}^{\times})^2$. We appeal to Theorem \ref{Thm:LangIrred} to establish a more general version of this observation. 

\begin{theorem}\label{Thm:PrimeBaseField}
Suppose $p$ and $k$ are prime. Then $\F_p^{\times} \leq (\F_{p^{km}}^{\times})^k$ for any positive integer $m$.
\end{theorem}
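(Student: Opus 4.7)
The plan is to prove the statement by a case analysis on whether a given element $\alpha \in \F_p^{\times}$ is already a $k$-th power in the prime subfield itself. In the trivial case, namely $\alpha \in (\F_p^{\times})^k$, there exists $\gamma \in \F_p^{\times}$ with $\alpha = \gamma^k$, and the containment $\F_p^{\times} \subseteq \F_{p^{km}}^{\times}$ immediately places $\alpha$ inside $(\F_{p^{km}}^{\times})^k$.

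The remaining case is when $\alpha \in \F_p^{\times} \setminus (\F_p^{\times})^k$. Here I would invoke Theorem \ref{Thm:LangIrred}, which (using that both $p$ and $k$ are prime) guarantees that the polynomial $x^k - \alpha$ is irreducible in $\F_p[x]$. Consequently, adjoining a root $\beta$ of $x^k - \alpha$ to $\F_p$ produces a field extension of degree exactly $k$ over $\F_p$, and since finite field extensions of $\F_p$ of a given degree are unique, we obtain $\F_p(\beta) = \F_{p^k}$ together with an element $\beta \in \F_{p^k}^{\times}$ satisfying $\beta^k = \alpha$.

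To conclude, I would observe the tower of finite fields $\F_p \subseteq \F_{p^k} \subseteq \F_{p^{km}}$, where the second containment holds because $k$ divides $km$ for every positive integer $m$. Therefore $\beta \in \F_{p^{km}}^{\times}$, which shows $\alpha = \beta^k \in (\F_{p^{km}}^{\times})^k$, completing the argument. Since $\alpha \in \F_p^{\times}$ was arbitrary, this establishes the containment $\F_p^{\times} \leq (\F_{p^{km}}^{\times})^k$.

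The proof is essentially a short application of Theorem \ref{Thm:LangIrred} combined with the standard subfield structure of finite fields, so there is no real obstacle; the only subtlety to be careful about is justifying that the non-$k$-th-power case genuinely yields an irreducible polynomial, which is precisely where the hypothesis that $k$ is prime is essential (so that Theorem \ref{Thm:LangIrred} applies), and that the primality of $p$ ensures $\F_p$ is the prime subfield so the statement of Lang's theorem is directly applicable.
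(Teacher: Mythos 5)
Your proposal is correct and follows essentially the same route as the paper: both arguments reduce to the nontrivial case $\alpha \notin (\F_p^{\times})^k$, apply Theorem \ref{Thm:LangIrred} to get irreducibility of $x^k - \alpha$, identify the resulting degree-$k$ extension with $\F_{p^k}$, and conclude via the containment $\F_{p^k} \subseteq \F_{p^{km}}$. The only cosmetic difference is that you state the trivial case explicitly and work with the root $\beta$ directly, whereas the paper routes the same fact through an explicit isomorphism $\varphi:\F_p(\theta)\to\F_{p^k}$.
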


\begin{proof}
Let $\alpha \in \F_p^{\times}$ and suppose that $\alpha \not\in (\F_p^{\times})^k$. Then by Theorem \ref{Thm:LangIrred}, $f(x) = x^k-\alpha$ is irreducible in $\F_p[x]$. Therefore, if $\theta$ is a root of $f$, then
$$\F_p[x] \slash \langle x^k-\alpha \rangle \cong \F_p(\theta) \cong \F_{p^k}.$$ 

\noindent
Now suppose that $\varphi:\F_p(\theta) \to \F_{p^k}$ is an isomorphism. Noting that $\varphi(\alpha)=\alpha$ since $\alpha \in \F_p$, it follows that
$$\varphi(\theta)^k - \alpha = \varphi(\theta^k) - \varphi(\alpha) = \varphi(\theta^k - \alpha) = \varphi(f(\theta)) = \varphi(0) = 0.\vspace{0.1cm}$$
Thus, $\alpha = \varphi(\theta)^k \in (\F_{p^{k}}^{\times})^k$ and therefore since $\F_{p^k} \subseteq \F_{p^{km}}$ for any positive integer $m$, it follows that $\alpha \in (\F_{p^{km}}^{\times})^k$. Hence, $\F_p^{\times} \leq (\F_{p^{km}}^{\times})^k$ as desired. 
\end{proof}

Now suppose $\F_q$ is a finite field of order $q=p^n$ such that $q \equiv 1 \bmod 2k$ with $k \geq 2$. As a simple consequence of the properties of finite cyclic groups, we have the following necessary and sufficient divisibility conditions for the multiplicative group of the prime subfield of $\F_q$ to be contained in the subgroup of nonzero $k$-th powers.

\begin{theorem}\label{Thm:EquivBaseField}
Let $k \geq 2$ and suppose that $q=p^n$ is a prime power such that $q \equiv 1 \bmod 2k$. Then $\F_p^{\times} \leq (\F_q^{\times})^k$ if and only if $k \mid \frac{q-1}{p-1}$.
\end{theorem}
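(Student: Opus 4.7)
The plan is to reduce the containment $\F_p^{\times} \leq (\F_q^{\times})^k$ to a numerical divisibility statement by exploiting the cyclic structure of $\F_q^{\times}$. The key structural fact is that a finite cyclic group has a unique subgroup of each order dividing the order of the group, and one such subgroup is contained in another if and only if its order divides the other's order. Both $\F_p^{\times}$ and $(\F_q^{\times})^k$ are subgroups of the cyclic group $\F_q^{\times}$ of order $q-1$, so this criterion applies directly.

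First I would fix a primitive element $\theta$ of $\F_q$, so that $\F_q^{\times} = \langle \theta \rangle$. Then $\F_p^{\times}$ is the unique subgroup of $\F_q^{\times}$ of order $p-1$, namely $\langle \theta^{(q-1)/(p-1)} \rangle$; note that $(p-1) \mid (q-1)$ is automatic since $\F_p^{\times} \leq \F_q^{\times}$. As already discussed in the paragraph following Proposition \ref{Prop:Regular}, the subgroup of nonzero $k$-th powers is $(\F_q^{\times})^k = \langle \theta^k \rangle$, and since $k \mid q-1$ it has order $(q-1)/k$.

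Next I would apply the uniqueness of subgroups in a finite cyclic group of each order: $\F_p^{\times} \leq (\F_q^{\times})^k$ holds if and only if $|\F_p^{\times}|$ divides $|(\F_q^{\times})^k|$, that is, if and only if $(p-1) \mid (q-1)/k$. Finally, since $(p-1) \mid (q-1)$, clearing denominators yields the equivalence
\[
(p-1) \mid \frac{q-1}{k} \iff k(p-1) \mid (q-1) \iff k \mid \frac{q-1}{p-1},
\]
which is the desired divisibility condition.

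There is no real obstacle here: the argument is essentially a one-line consequence of the classification of subgroups of a finite cyclic group, combined with the identification of $(\F_q^{\times})^k$ as the subgroup generated by $\theta^k$. The only mild care needed is to justify that $p-1$ divides $q-1$ before rearranging the divisibility, which follows immediately from $\F_p \subseteq \F_q$.
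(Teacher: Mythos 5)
Your proposal is correct and follows essentially the same route as the paper: both arguments identify $\F_p^{\times}$ and $(\F_q^{\times})^k$ as subgroups of the cyclic group $\F_q^{\times}$ of orders $p-1$ and $(q-1)/k$ respectively, invoke the fact that containment of subgroups of a finite cyclic group is equivalent to divisibility of their orders, and then rearrange $(p-1) \mid \frac{q-1}{k}$ into $k \mid \frac{q-1}{p-1}$. Your write-up merely makes explicit the generators and the uniqueness-of-subgroups fact that the paper cites as ``properties of finite cyclic groups and Lagrange's theorem.''
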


\begin{proof}
Let $\F_q$ be a finite field of order $q=p^n$ such that $q \equiv 1 \bmod 2k$. Then since
$$\vert \F_p^{\times} \vert = p-1 \quad \text{and} \quad \vert (\F_{q}^{\times})^k \vert = \frac{q-1}{k},$$
it follows from properties of finite cyclic groups and Lagrange's theorem that $\F_p^{\times} \leq (\F_q^{\times})^k$ if and only if $p-1 \mid \frac{q-1}{k}$, or equivalently $k \mid \frac{q-1}{p-1}$.
\end{proof}

Clearly, since we are considering generalized Paley graphs $\Pg(q,k)$ of order $q=p^n$ where $q \equiv 1 \bmod 2k$, it follows that $k \mid q-1$. 
Due to Theorem \ref{Thm:EquivBaseField}, it turns out that when $k$ also divides $p-1$, the multiplicative group of the corresponding prime subfield $\F_p^{\times}$ is contained in the subgroup of nonzero $k$-th powers $(\F_q^{\times})^k$ if and only if $n$ is a multiple of $k$.

\begin{corollary}\label{Cor:knPower}
Let $k \geq 2$ and suppose that $q=p^n$ is a prime power such that $q \equiv 1 \bmod 2k$. If $k \mid p-1$, then $\F_p^{\times} \leq (\F_q^{\times})^k$ if and only if $n \equiv 0 \bmod k$.
\end{corollary}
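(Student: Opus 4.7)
The plan is to reduce the claim to Theorem \ref{Thm:EquivBaseField} and then carry out a short modular arithmetic argument exploiting the geometric series formula. Since Theorem \ref{Thm:EquivBaseField} already gives $\F_p^{\times} \leq (\F_q^{\times})^k$ if and only if $k \mid \frac{q-1}{p-1}$, the entire corollary reduces to showing that, under the additional hypothesis $k \mid p-1$, the divisibility $k \mid \frac{p^n-1}{p-1}$ is equivalent to $k \mid n$.

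To do this, I would first rewrite the quotient as the geometric sum
\begin{equation*}
\frac{p^n-1}{p-1} = 1 + p + p^2 + \cdots + p^{n-1}.
\end{equation*}
The hypothesis $k \mid p-1$ means exactly that $p \equiv 1 \pmod{k}$, hence $p^i \equiv 1 \pmod{k}$ for every $i \geq 0$. Summing from $i=0$ to $n-1$ then yields
\begin{equation*}
\frac{p^n-1}{p-1} \equiv n \pmod{k}.
\end{equation*}
From this congruence it is immediate that $k$ divides $\frac{p^n-1}{p-1}$ if and only if $k$ divides $n$, which combined with Theorem \ref{Thm:EquivBaseField} yields the claimed equivalence.

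There is no real obstacle here: the work is essentially a two-line modular arithmetic computation once Theorem \ref{Thm:EquivBaseField} is invoked. The only point worth stating carefully is that the geometric series identity $\frac{p^n-1}{p-1} = \sum_{i=0}^{n-1} p^i$ is a genuine equality of integers (since $p \neq 1$), so that reduction modulo $k$ is legitimate and does not require $p-1$ to be invertible modulo $k$ — indeed $p-1$ is divisible by $k$ here, so any naive attempt to ``divide by $p-1$'' modulo $k$ would be meaningless and must be avoided.
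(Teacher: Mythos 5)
Your proposal is correct and follows essentially the same route as the paper: reduce to Theorem \ref{Thm:EquivBaseField}, expand $\frac{q-1}{p-1}$ as the geometric sum $1+p+\cdots+p^{n-1}$, and use $p \equiv 1 \bmod k$ to conclude the sum is congruent to $n$ modulo $k$. Your closing caution about working with the integer identity rather than attempting to invert $p-1$ modulo $k$ is a sensible remark, but the argument itself matches the paper's.
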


\begin{proof}\label{Cor:nMultk}
By Theorem \ref{Thm:EquivBaseField}, $\F_p^{\times} \leq (\F_q^{\times})^k$ if and only if $k \mid \frac{q-1}{p-1}$.
Hence, $\F_p^{\times} \leq (\F_q^{\times})^k$ if and only if
\begin{equation}\label{Eqn:Mod1}
\frac{q-1}{p-1} = p^{n-1} + p^{n-2} + \cdots + p + 1 \equiv 0 \bmod k.
\end{equation}
But $k \mid p-1$ so $p \equiv 1 \bmod k$ and therefore
\begin{equation}\label{Eqn:Mod2}
\frac{q-1}{p-1} = p^{n-1} + p^{n-2} + \cdots + p + 1 \equiv n \bmod k.
\end{equation}
Hence, by \eqref{Eqn:Mod1} and \eqref{Eqn:Mod2} it follows that $\F_p^{\times} \leq (\F_q^{\times})^k$ if and only if $n \equiv 0 \bmod k$.
\end{proof}

Now that we have a better understanding of when the multiplicative group of the prime subfield is contained in the subgroup of nonzero $k$-th powers of a given finite field, we develop the sorting algorithm that we will use to construct a perfect matching between the neighbor sets $N_0$ and $N_1$ in a generalized Paley graph. To this end, let $\theta$ be a generator of the multiplicative group $\F_q^{\times}$ of the finite field $\F_q$ of order $q=p^n$. Consider the minimal polynomial $f \in \F_p[x]$ of $\theta$ such that 
\begin{equation}\label{Eqn:FieldIsom}
\F_q \cong \F_p(\theta) \cong \F_p[x] \slash \langle f(x) \rangle.
\end{equation}
In light of the isomorphisms in \eqref{Eqn:FieldIsom}, it follows that $1, \theta, \theta^2,\dots,\theta^{n-1}$ is a basis for $\F_q$ as a vector space over $\F_p$ and therefore 
\begin{equation}\label{Eqn:VS}
\F_q = \{ a_0 +  a_1\theta + a_2 \theta^2 + \cdots + a_{n-1}\theta^{n-1} \mid a_0, a_1,\dots,a_{n-1} \in \F_p\}.
\end{equation}

Our formalization of the sorting algorithm and substantiation of a perfect matching between the neighbor sets $N_0$ and $N_1$ relies on the following fact that was first observed for quadratic Paley graphs in \cite{B2}.
 
\begin{lemma}\label{Lem:PartitionSets}
Let $\Pg(q,k)$ be a connected generalized Paley graph of order $q=p^n$ and suppose that $\theta$ is a generator of the multiplicative group $\F_q^{\times}$. Fix $a_1,\dots, a_{n-1} \in \F_p$ not all zero and set
$$S=S(a_1,\dots,a_{n-1})=\{ b + a_1\theta + a_2 \theta^2 + \cdots + a_{n-1}\theta^{n-1} \mid b\in \F_p\}.$$ 
Then $\vert N_0 \cap S \vert = \vert N_1 \cap S \vert$.
\end{lemma}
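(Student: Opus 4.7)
The plan is to parametrize the affine set $S$ by $b \in \F_p$ via $v = c + b$, where $c = a_1\theta + a_2\theta^2 + \cdots + a_{n-1}\theta^{n-1}$. Since $\{1, \theta, \ldots, \theta^{n-1}\}$ is an $\F_p$-basis of $\F_q$ and the coefficients $a_i$ do not all vanish, we have $c \notin \F_p$. In particular $c + b \neq 0$ and $c + b \neq 1$ for every $b \in \F_p$, so the excluded vertices $0$ and $1$ in the definitions of $N_0$ and $N_1$ contribute nothing when intersecting with $S$, and $b \mapsto c+b$ is a bijection from $\F_p$ onto $S$.

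With this setup I would introduce the two subsets of $\F_p$
\[
P = \{b \in \F_p : c + b \in (\F_q^{\times})^k\}, \qquad Q = \{b \in \F_p : c + b - 1 \in (\F_q^{\times})^k\}.
\]
Unwinding the definitions of $N_0$ and $N_1$, the intersection $N_0 \cap S$ corresponds bijectively to $P \setminus Q$ (an element $c+b$ of $S$ lies in $N_0$ precisely when it is a neighbor of $0$ but not of $1$), while $N_1 \cap S$ corresponds to $Q \setminus P$.

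The key observation is then that $Q = P + 1$ as subsets of $\F_p$: indeed, $c + b - 1 \in (\F_q^{\times})^k$ is equivalent to $(b - 1) \in P$, and $b \mapsto b + 1$ is a bijection on $\F_p$. Hence $\vert P \vert = \vert Q \vert$, and a one-line inclusion--exclusion yields
\[
\vert N_0 \cap S \vert = \vert P \setminus Q \vert = \vert P \vert - \vert P \cap Q \vert = \vert Q \vert - \vert P \cap Q \vert = \vert Q \setminus P \vert = \vert N_1 \cap S \vert.
\]
The only subtlety worth flagging is the appeal to linear independence of $1, \theta, \ldots, \theta^{n-1}$ over $\F_p$ to guarantee $0, 1 \notin S$; after that the argument reduces to the translation identity $Q = P+1$, so no serious obstacle is anticipated. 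Notably, this reasoning does not require the hypothesis $\F_p^{\times} \leq (\F_q^{\times})^k$, which will enter only in the subsequent pairing step of the sorting algorithm.
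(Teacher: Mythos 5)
Your proof is correct and is essentially the paper's argument in different notation: your sets $P$ and $Q$ are precisely $\Gamma(0)\cap S$ and $\Gamma(1)\cap S$ under the parametrization $b\mapsto c+b$, the identity $Q=P+1$ is the paper's translation observation that $x\in\Gamma(0)$ iff $x+1\in\Gamma(1)$, and your inclusion--exclusion step of subtracting $\vert P\cap Q\vert$ matches the paper's subtraction of $\vert\nabla_{01}\cap S\vert$ from both sides. Your remark that the hypothesis $\F_p^{\times}\leq(\F_q^{\times})^k$ is not needed here is accurate and consistent with the paper, which only invokes it later in Theorem \ref{Thm:GlobalMatch}.
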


\begin{proof}
First note that $x \in \Gamma(0)$ if and only if $x \in (\F_q^{\times})^k$ if and only if $x+1 \in \Gamma(1)$ and therefore 
\begin{equation}\label{Eqn:Int1Step}
\vert \Gamma(0) \cap S \vert = \vert \Gamma(1) \cap S \vert.
\end{equation}
Now since $a_1,\dots, a_{n-1}  \in \F_p$ are not all zero, it follows that
$$\{0\} \cap S = \varnothing \quad \text{and} \quad \{1\} \cap S = \varnothing.$$
Moreover, since
$$ \Gamma(0) = \{1\} \cup N_0 \cup \nabla_{01} \quad \text{and} \quad \Gamma(1) = \{0\} \cup N_1 \cup \nabla_{01}$$
are disjoint unions, it follows that
\begin{equation}\label{Eqn:SizeG0}
\vert \Gamma(0) \cap S \vert = \vert N_0 \cap S \vert + \vert \nabla_{01} \cap S \vert
\end{equation}
and
\begin{equation}\label{Eqn:SizeG1}
\vert \Gamma(1) \cap S \vert = \vert N_1 \cap S \vert + \vert \nabla_{01} \cap S \vert.\vspace{0.1cm}
\end{equation}
Thus, by \eqref{Eqn:Int1Step} we may equate equations \eqref{Eqn:SizeG0} and \eqref{Eqn:SizeG1} to find $\vert N_0 \cap S \vert = \vert N_1 \cap S \vert$ . 
\end{proof}

Lemma \ref{Lem:PartitionSets} provides an avenue for partitioning the neighbor sets $N_0$ and $N_1$ of a generalized Paley graph $\Pg(q,k)$ by grouping vertices (or elements) in $N_0$ and $N_1$ with the same coefficients of $\theta,\dots,\theta^{n-1}$. In particular, when the nonzero elements of the prime subfield are all $k$-th powers, we can partition the induced bipartite subgraph $H$ consisting of all edges in $E$ between $N_0$ and $N_1$ into complete balanced bipartite subgraphs. By matching vertices in each of these complete subgraphs in pairwise fashion, we obtain a perfect matching between $N_0$ and $N_1$, which by symmetry shows that the generalized Paley graphs under consideration satisfy the Global Matching Condition. This approach is based on ideas from \cite{B2} where the argument was outlined for quadratic Paley graphs of even power order $q=p^{2m}$.

\begin{theorem}\label{Thm:GlobalMatch}
Let $k \geq 2$ and suppose $\Pg(q,k)$ is a connected generalized Paley graph of order $q=p^n$. If $\F_p^{\times} \leq (\F_q^{\times})^k$, then $\Pg(q,k)$ satisfies the Global Matching Condition.
\end{theorem}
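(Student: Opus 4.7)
The strategy is to reduce the Global Matching Condition to the single edge $01 \in E$ by invoking the arc-transitivity of $\Pg(q,k)$ established in Proposition \ref{Prop:Symmetry}, and then to produce a perfect matching between $N_0$ and $N_1$ by ``sorting'' vertices according to their coefficients in the $\F_p$-basis $\{1,\theta,\theta^2,\ldots,\theta^{n-1}\}$ of $\F_q$, where $\theta$ is a generator of $\F_q^\times$. The affine automorphism guaranteed by Proposition \ref{Prop:Symmetry} sending $0 \mapsto x$, $1 \mapsto y$ preserves adjacency and therefore carries $N_0 \to N_x$, $N_1 \to N_y$, and $\nabla_{01} \to \nabla_{xy}$, so a perfect matching at the edge $01$ transports to a perfect matching at every edge.

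For each tuple $(a_1,\ldots,a_{n-1}) \in \F_p^{n-1} \setminus \{0\}$, I would consider the fiber $S = S(a_1,\ldots,a_{n-1})$ from Lemma \ref{Lem:PartitionSets}; these fibers, each of cardinality $p$, partition $\F_q \setminus \F_p$. The first substantive step is to verify that $N_0$ and $N_1$ lie entirely in $\F_q \setminus \F_p$. The containment $\F_p^\times \leq (\F_q^\times)^k$ implies that every $a \in \F_p^\times$ is adjacent to $0$, and for every $a \in \F_p \setminus \{0,1\}$ the difference $a-1 \in \F_p^\times$ is also a $k$-th power, so $a$ is adjacent to $1$ as well; thus $\F_p \setminus \{0,1\} \subseteq \nabla_{01}$, forcing $N_0 \cap \F_p = N_1 \cap \F_p = \varnothing$. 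The fibers therefore genuinely cover $N_0 \cup N_1$.

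Next, I would use the containment hypothesis a second time to observe that any two distinct elements of a given fiber $S$ differ by an element of $\F_p^\times \leq (\F_q^\times)^k$, so the induced subgraph on $S$ is complete; in particular, the bipartite subgraph between $N_0 \cap S$ and $N_1 \cap S$ is a complete bipartite graph, and Lemma \ref{Lem:PartitionSets} supplies $\vert N_0 \cap S \vert = \vert N_1 \cap S \vert$, so it is balanced. A complete balanced bipartite graph trivially admits a perfect matching, and taking the disjoint union of such matchings across all nonzero fibers yields a perfect matching between $N_0$ and $N_1$.

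The principal obstacle I anticipate is justifying that $\F_p$ is fully absorbed into $\{0,1\} \cup \nabla_{01}$, since the fiber decomposition only partitions $\F_q \setminus \F_p$ and any leftover element of $N_0$ or $N_1$ inside $\F_p$ would not be sorted; it is precisely here that the hypothesis $\F_p^\times \leq (\F_q^\times)^k$ does double duty, simultaneously guaranteeing absorption of $\F_p$ into $\nabla_{01}$ and the within-fiber completeness that makes each local matching automatic.
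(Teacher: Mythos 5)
Your proposal is correct and follows essentially the same route as the paper: reduce to the edge $01$ by symmetry, sort $N_0$ and $N_1$ into the fibers $S(a_1,\dots,a_{n-1})$, use $\F_p^{\times} \leq (\F_q^{\times})^k$ both to absorb $\F_p$ into $\{0,1\} \cup \nabla_{01}$ and to make each fiber's bipartite subgraph complete, and apply Lemma \ref{Lem:PartitionSets} for balance. Your explicit verification that $\F_p \setminus \{0,1\} \subseteq \nabla_{01}$ is a welcome elaboration of a step the paper only asserts.
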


\begin{proof}
By symmetry it suffices to consider the edge $01 \in E$ and to show that there is a perfect matching between $N_0$ and $N_1$.  As in Lemma \ref{Lem:PartitionSets}, fix $a_1,\dots, a_{n-1} \in \F_p$ not all zero and set 
$$S(a_1,\dots,a_{n-1}) = \{ b + a_1\theta + a_2 \theta^2 + \cdots + a_{n-1}\theta^{n-1} \mid b\in \F_p\}.$$ 
Then for any $\alpha \in N_0 \cap S(a_1,\dots,a_{n-1})$, $\beta \in N_1 \cap S(a_1,\dots,a_{n-1})$, it follows that 
$$\alpha - \beta \in \F_p^{\times} \leq (\F_q^{\times})^k.$$   
Hence, $\alpha\beta \in E$ and therefore the bipartite subgraph $H(a_1,\dots,a_{n-1})$ consisting of all edges in $E$ between $N_0 \cap S(a_1,\dots,a_{n-1})$ and $N_1 \cap S(a_1,\dots,a_{n-1})$ is complete. 
In light of Lemma \ref{Lem:PartitionSets}, it follows that that the complete bipartite subgraph $H(a_1,\dots,a_{n-1})$ is balanced. Thus, we can match vertices in $H(a_1,\dots,a_{n-1})$ in a pairwise fashion to obtain a perfect matching between $N_0 \cap S(a_1,\dots,a_{n-1})$ and $N_1 \cap S(a_1,\dots,a_{n-1})$.

Now let $a=(a_1,\dots,a_{n-1}) \in \F_p^{n-1}$ be a multi-index for $a_1,\dots,a_{n-1} \in \F_p$ so that $S(a) = S(a_1,\dots,a_{n-1})$. Set $\mathcal{S} = \{S(a) \mid a \neq 0 \in \F_p^{n-1}\}$ and let $\mathcal{J}$ be a multi-indexing set for $\mathcal{S}$. Noting that $\F_p^{\times} \leq (\F_q^{\times})^k$ implies $\F_p \subseteq \{0\} \cup \{1\} \cup \nabla_{01}$, it follows that
\begin{equation}\label{Eqn:UnionN0}
{\textstyle \bigcup \limits_{a \in \mathcal{J}}} N_0 \cap S(a) =  N_0 \cap {\textstyle \bigcup \limits_{a \in \mathcal{J}}} S(a) = N_0 \cap \left(\F_q \setminus \F_p\right) = N_0
\end{equation}
and similarly
\begin{equation}\label{Eqn:UnionN1}
{\textstyle \bigcup \limits_{a \in \mathcal{J}}} N_1 \cap S(a) = N_1.
\end{equation}
Therefore, since the sets $S(a)=S(a_1,\dots,a_{n-1})$ are disjoint for each distinct choice of $a=(a_1,\dots,a_{n-1}) \in \mathcal{J}$, it follows from \eqref{Eqn:UnionN0} and \eqref{Eqn:UnionN1} that the bipartite subgraph $H$ between $N_0$ and $N_1$ can be partitioned into complete balanced bipartite subgraphs $H(a)=H(a_1,\dots,a_{n-1})$ consisting of all edges in $E$ between $N_0 \cap S(a)$ and $N_1 \cap S(a)$. Thus, since each subgraph $H(a)$ admits a perfect matching, it follows that there is a perfect matching between $N_0$ and $N_1$ and therefore $\Pg(q,k)$ satisfies the Global Matching Condition by symmetry. 
 \end{proof}
 
Due to Theorems \ref{Thm:IntroSmith}, \ref{Thm:kmConnected}, \ref{Thm:PrimeBaseField}, and \ref{Thm:GlobalMatch}, we obtain the following generalization of the unpublished work of \cite{B2} on quadratic Paley graphs $\Pg(q,2)$ of even power order $q=p^{2m}$.

\begin{theorem}\label{Thm:PrimeRicci}
Let $m \geq 1$ and suppose that $\Pg(q,k)=(V,E)$ is a generalized Paley graph of order $q=p^{km}$  where $k$ is prime. Then the condensed Ricci curvature
$$\Bbbk(x,y) = \frac{k}{q-1}(2 + \vert \nabla_{xy} \vert)$$
for any edge $xy \in E$.  
\end{theorem}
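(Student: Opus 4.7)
The plan is to synthesize the earlier results of the paper, since each of the ingredients required to invoke Theorem \ref{Thm:IntroSmith} has already been assembled in the preceding sections. Specifically, I will verify that the hypotheses of Theorem \ref{Thm:GlobalMatch} are satisfied for $\Pg(q,k)$ with $q = p^{km}$ and $k$ prime, and then apply Proposition \ref{Prop:Regular} to identify the correct regularity constant.

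First I would observe that, by Theorem \ref{Thm:kmConnected}, the generalized Paley graph $\Pg(q,k)$ of order $q = p^{km}$ is connected (the hypothesis $q \equiv 1 \bmod 2k$ being part of the definition of a generalized Paley graph). Next, since $p$ and $k$ are both prime and $q = p^{km}$, Theorem \ref{Thm:PrimeBaseField} yields the containment $\F_p^{\times} \leq (\F_q^{\times})^k$. Combining these two facts, Theorem \ref{Thm:GlobalMatch} then tells us that $\Pg(q,k)$ satisfies the Global Matching Condition.

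At this point Theorem \ref{Thm:IntroSmith} applies, giving
\[
\Bbbk(x,y) = \frac{1}{d}\bigl(2 + \vert \nabla_{xy} \vert\bigr)
\]
for every edge $xy \in E$, where $d$ is the common degree of the (regular) graph $\Pg(q,k)$. By Proposition \ref{Prop:Regular}, the degree is $d = \frac{q-1}{k}$, and substituting yields the claimed formula $\Bbbk(x,y) = \frac{k}{q-1}(2 + \vert \nabla_{xy} \vert)$.

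There is essentially no obstacle in this argument — the theorem is a genuine corollary whose only content is the bookkeeping of which earlier results to cite. The real mathematical work has already been done in Theorems \ref{Thm:kmConnected}, \ref{Thm:PrimeBaseField}, and \ref{Thm:GlobalMatch}, which collectively established connectivity, the prime-subfield containment, and the Global Matching Condition in this setting.
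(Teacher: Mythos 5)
Your proposal is correct and follows essentially the same route as the paper's own proof: connectivity via Theorem \ref{Thm:kmConnected}, the containment $\F_p^{\times} \leq (\F_q^{\times})^k$ via Theorem \ref{Thm:PrimeBaseField}, the Global Matching Condition via Theorem \ref{Thm:GlobalMatch}, and then Theorem \ref{Thm:IntroSmith}. Your explicit appeal to Proposition \ref{Prop:Regular} to identify $d = \frac{q-1}{k}$ is a small tidying touch the paper leaves implicit, but the argument is the same.
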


\begin{proof}
First note that since $\Pg(q,k)$ is a generalized Paley graph of order $q=p^{km}$ with $k \geq 2$, it follows that $\Pg(q,k)$ is connected by Theorem \ref{Thm:kmConnected}. Moreover, since $k$ is prime it follows that $\F_p^{\times} \leq (\F_q^{\times})^k$ by Theorem \ref{Thm:PrimeBaseField}. Hence, from Theorem \ref{Thm:GlobalMatch} we see that $\Pg(q,k)$ satisfies the Global Matching Condition so that the condensed Ricci curvature
$$\Bbbk(x,y) = \frac{k}{q-1}(2 + \vert \nabla_{xy} \vert)$$
for any edge $xy \in E$ by Theorem \ref{Thm:IntroSmith}.  
\end{proof}

Furthermore, as a consequence of Theorems \ref{Thm:IntroSmith}, \ref{Thm:EquivBaseField}, and \ref{Thm:GlobalMatch}, we obtain the same explicit formula for the condensed Ricci curvature on a large class of connected generalized Paley graphs.

\begin{theorem}\label{Thm:Ricci}
Let $k \geq 2$ and suppose that $\Pg(q,k)=(V,E)$ is a connected generalized Paley graph of order $q=p^n$ where $k \mid \frac{q-1}{p-1}$. Then the condensed Ricci curvature
$$\Bbbk(x,y) = \frac{k}{q-1}(2 + \vert \nabla_{xy} \vert)$$
for any edge $xy \in E$. 
\end{theorem}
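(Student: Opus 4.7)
The proof is essentially an assembly of results already established in the excerpt, so the plan is to chain them together in the correct order. First I would invoke Theorem \ref{Thm:EquivBaseField}: since by hypothesis $\Pg(q,k)$ is a generalized Paley graph (so $q \equiv 1 \bmod 2k$) and $k \mid \tfrac{q-1}{p-1}$, this theorem gives the crucial algebraic containment $\F_p^{\times} \leq (\F_q^{\times})^k$. This containment is exactly the algebraic input needed to run the sorting argument.

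Next, I would feed this containment, together with the connectedness hypothesis, into Theorem \ref{Thm:GlobalMatch}. That theorem is precisely the statement that connectedness plus $\F_p^{\times} \leq (\F_q^{\times})^k$ implies that $\Pg(q,k)$ satisfies the Global Matching Condition. At this stage there is nothing more to verify algebraically; the sorting of neighbor sets by the $\F_p$-basis expansion in $\theta$ and the pairwise matching within complete balanced bipartite components has already been packaged into that theorem.

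Finally, I would apply Theorem \ref{Thm:IntroSmith} of Smith, which says that any locally finite, connected graph satisfying the Global Matching Condition is regular of some degree $d$ and has condensed Ricci curvature $\Bbbk(x,y) = \tfrac{1}{d}(2 + |\nabla_{xy}|)$ on every edge. By Proposition \ref{Prop:Regular}, the degree of $\Pg(q,k)$ is $d = \tfrac{q-1}{k}$, so substituting yields the claimed formula
\[
\Bbbk(x,y) = \frac{k}{q-1}\bigl(2 + |\nabla_{xy}|\bigr).
\]

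There is no real obstacle here; the conceptual and combinatorial difficulty was absorbed into Theorem \ref{Thm:GlobalMatch}, and the algebraic difficulty into Theorem \ref{Thm:EquivBaseField}. The only point worth double-checking in writing is that the hypotheses line up cleanly: the divisibility condition $k \mid \tfrac{q-1}{p-1}$ is exactly the equivalent condition from Theorem \ref{Thm:EquivBaseField} for $\F_p^{\times} \leq (\F_q^{\times})^k$, and the connectedness hypothesis is exactly what Theorem \ref{Thm:GlobalMatch} requires. So the proof is a three-line citation chain.
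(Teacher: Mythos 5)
Your proposal matches the paper's proof exactly: it cites Theorem \ref{Thm:EquivBaseField} to get $\F_p^{\times} \leq (\F_q^{\times})^k$ from the divisibility hypothesis, then Theorem \ref{Thm:GlobalMatch} for the Global Matching Condition, and finally Theorem \ref{Thm:IntroSmith} together with the degree $d = \tfrac{q-1}{k}$ to obtain the curvature formula. The argument is correct and complete as stated.
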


\begin{proof}
Since $k \mid \frac{q-1}{p-1}$ it follows that $\F_p^{\times} \leq (\F_q^{\times})^k$ from Theorem \ref{Thm:EquivBaseField}. Hence, $\Pg(q,k)$ satisfies the Global Matching Condition by Theorem \ref{Thm:GlobalMatch}. Therefore, since $\Pg(q,k)$ is connected, it follows from Theorem \ref{Thm:IntroSmith} that the condensed Ricci curvature
$$\Bbbk(x,y) = \frac{k}{q-1}(2 + \vert \nabla_{xy} \vert)$$
for any edge $xy \in E$.
\end{proof}

On the other hand, when a generalized Paley graph $\Pg(q,k)$ with $k \mid \frac{q-1}{p-1}$ is disconnected, our work can be adapted to its connected components to recover the same formula as in Theorem \ref{Thm:Ricci} for the condensed Ricci curvature.

\begin{theorem}\label{Thm:DisconnectedRicci}
Let $k \geq 2$ and suppose that $\Pg(q,k)=(V,E)$ is a disconnected generalized Paley graph order $q=p^n$ where $k \mid \frac{q-1}{p-1}$. Then the condensed Ricci curvature
$$\Bbbk(x,y) = \frac{k}{q-1}(2 + \vert \nabla_{xy} \vert)$$
for any edge $xy \in E$.
\end{theorem}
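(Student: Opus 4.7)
The plan is to reduce to the connected case by passing to a single connected component of $\Pg(q,k)$. Fix an edge $xy \in E$; both endpoints, and every vertex in $\Gamma(x) \cup \Gamma(y)$, must lie in one and the same connected component, so the condensed Ricci curvature $\Bbbk(x,y)$ is determined entirely by the induced subgraph on that component. By Theorem~\ref{Thm:LimPExtension}, each connected component is isomorphic to the generalized Paley graph $\Pg(p^a, k')$ with $k' = k(p^a-1)/(q-1)$, where $a$ is the smallest proper divisor of $n$ such that $(q-1)/(p^a-1)$ divides $k$.

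Next I would verify that the divisibility hypothesis $k \mid (q-1)/(p-1)$ transfers to the component parameters. Writing $(q-1)/(p-1) = kt$ for some $t \in \Z$ and using the relation $k'(q-1) = k(p^a-1)$, one obtains $k't = (p^a-1)/(p-1)$, so $k' \mid (p^a-1)/(p-1)$. The congruence $p^a \equiv 1 \bmod 2k'$ that is needed for $\Pg(p^a, k')$ to qualify as a generalized Paley graph in the sense of Definition~\ref{Def:kPaleyGraphs} follows from $(p^a-1)/(2k') = (q-1)/(2k) \in \Z$. The component is of course connected by construction.

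With these checks in place, I would split into two cases according to $k'$. If $k' \geq 2$, Theorem~\ref{Thm:Ricci} applies directly to the component and gives $\Bbbk(x,y) = \frac{k'}{p^a - 1}(2 + \vert \nabla_{xy} \vert)$; substituting $k' = k(p^a-1)/(q-1)$ collapses the prefactor to $k/(q-1)$, yielding the claim. If $k' = 1$, the component is the complete graph $K_{p^a}$, so $\vert \nabla_{xy} \vert = p^a - 2$ and Theorem~\ref{Thm:CompleteGraphsIntro} gives $\Bbbk(x,y) = p^a/(p^a-1)$; the identity $k(p^a-1) = q-1$ (which is exactly the statement $k' = 1$) then shows this coincides with $\frac{k}{q-1}(2 + \vert \nabla_{xy} \vert) = \frac{k p^a}{q-1}$.

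There is no substantive obstacle here beyond careful bookkeeping; the only delicate point is confirming that the divisibility $k \mid (q-1)/(p-1)$ is inherited as $k' \mid (p^a-1)/(p-1)$ in the component so that Theorem~\ref{Thm:Ricci} is legitimately available, and the algebra above does exactly that.
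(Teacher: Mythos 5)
Your proposal is correct and follows essentially the same route as the paper: reduce to a connected component isomorphic to $\Pg(p^a,k')$ via Theorem~\ref{Thm:LimPExtension}, check that the divisibility hypothesis descends to the component (your $k' \mid (p^a-1)/(p-1)$ is equivalent to the paper's $p-1 \mid (p^a-1)/k'$, i.e.\ $\F_p^{\times} \leq (\F_{p^a}^{\times})^{k'}$), and then split into the cases $k' \geq 2$ (Global Matching Condition plus Theorem~\ref{Thm:IntroSmith}) and $k'=1$ (complete graph, Theorem~\ref{Thm:CompleteGraphsIntro}). The only cosmetic difference is that you invoke Theorem~\ref{Thm:Ricci} as a black box for the component while the paper re-derives the subgroup containment directly before applying Theorems~\ref{Thm:GlobalMatch} and~\ref{Thm:IntroSmith}.
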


\begin{proof} From Theorems \ref{Thm:LimP} and \ref{Thm:LimPExtension}, it follows that each of the connected components of $\Pg(q,k)$ is isomorphic to the generalized Paley graph $\Pg(p^a,k')$ with $k'=k(p^a-1)/(q-1) \geq 1$ where $a$ is the smallest proper divisor of $n$ such that $(q-1)/(p^a-1)$ divides $k$. Recalling that the divisibility condition $k \mid \frac{q-1}{p-1}$ is equivalent to $p-1 \mid \frac{q-1}{k}$, it follows that
$$p-1 \mid \frac{q-1}{k} = \frac{p^a-1}{k'} = \vert (\F_{p^a}^{\times})^{k'} \vert$$
and therefore $\F_p^{\times} \leq (\F_{p^a}^{\times})^{k'}$ since $(\F_{p^a}^{\times})^{k'}$ is a finite cyclic group. So for $k' > 1$, applying Theorems \ref{Thm:GlobalMatch} and \ref{Thm:IntroSmith} to the connected components of $\Pg(q,k)$ and noting the fact that $k'=k(p^a-1)/(q-1)$, we see that the condensed Ricci curvature 
$$\Bbbk(x,y) = \frac{k'}{p^a-1}(2 + \vert \nabla_{xy} \vert) = \frac{k}{q-1}(2 + \vert \nabla_{xy} \vert)$$
for any edge $xy \in E$.

In the special case that $k'=1$, we recall that the subgroup of nonzero $1$-st powers in $\F_{p^a}$ is simply the multiplicative group $\F_{p^a}^{\times}$. Therefore, each connected component of the disconnected generalized Paley graph $\Pg(q,k)$ is isomorphic to the complete graph $\Pg(p^a,1) = K_{p^a}$ on $p^a$ vertices. Thus, since the complete graph $K_{p^a}=(V,E)$ on $p^a$ vertices is regular of degree $p^a-1$, it follows for any edge $xy \in E$ that $\nabla_{xy} = V \setminus (\{x\} \cup \{y\})$. Hence, $\vert \nabla_{xy} \vert = p^a-2$ and by Theorem \ref{Thm:CompleteGraphsIntro} we recover the formula 
$$\Bbbk(x,y) = \frac{p^a}{p^a-1} = \frac{1}{p^a-1}(2 + \vert \nabla_{xy} \vert)=\frac{k}{q-1}(2 + \vert \nabla_{xy} \vert)$$
for the condensed Ricci curvature of any edge $xy \in E$.
\end{proof}


\bibliographystyle{plain}
\bibliography{paleygraphs}


\end{document}